\newtheorem{defi}{Definition}[section]
\newtheorem{definition}[defi]{Definition}
\newtheorem{theorem}[defi]{Theorem}
\newtheorem{remark}[defi]{Remark}
\newtheorem{lemma}[defi]{Lemma}
\newcommand{\N}{{\mathbb{N}}} 
\newcommand{\Z}{{\mathbb{Z}}} 
\newcommand{\R}{{\mathbb{R}}} 
\newcommand{\e}{{\mathrm{e}}}
\newcommand{\G}{\mathcal{G}}
\newcommand{\CC}{\mathcal{C}}
\newcommand{\M}{\mathcal{M}}
\newcommand{\vol}{\operatorname{vol}}
\newcommand{\sech}{\operatorname{sech}}
\newcommand{\Supp}{\operatorname{supp}}
\newcommand{\1}{{\bf \operatorname{1}}}
\begin{document}

\title{The density of sets avoiding distance $1$ in Euclidean space}

\author{Christine Bachoc}
\address{Christine Bachoc \\Univ. Bordeaux, Institut de math\'ematiques de Bordeaux\\ 351, cours de la Lib\'eration\\ 33405 Talence\\ France}
\email{christine.bachoc@u-bordeaux.fr}
\thanks{This study has been carried out with financial support from the French State, managed by the French National Research Agency (ANR) in the frame of the Investments for the future Programme IdEx Bordeaux (ANR-10-IDEX-03-02).}

\author{Alberto Passuello}
\address{Alberto Passuello\\Univ. Bordeaux, Institut de math\'ematiques de Bordeaux\\ 351, cours de la Lib\'eration\\ 33405 Talence\\ France}
\email{alberto.passuello@u-bordeaux.fr}

\author{Alain Thiery}
\address{Alain Thiery \\Univ. Bordeaux, Institut de math\'ematiques de Bordeaux\\ 351, cours de la Lib\'eration\\ 33405 Talence\\ France}
\email{alain.thiery@u-bordeaux.fr}

\date{\today}

\subjclass[2000]{52C10, 90C05, 90C27, 05C69}
\keywords{unit distance graph, measurable chromatic number, theta number, linear programming}

\begin{abstract}
We improve by an exponential factor the best known asymptotic  upper bound for the density of sets avoiding $1$ in
Euclidean space. This result is obtained by a combination of an
analytic bound that is an analogue of Lov\'asz theta number and of a
combinatorial argument  involving finite subgraphs of the unit
distance graph. In turn, we straightforwardly obtain an asymptotic
improvement for the measurable chromatic number of Euclidean
space. We also tighten previous results for  the dimensions between $4$ and $24$.
\end{abstract}

\maketitle

\section{Introduction}

In the Euclidean space $\R^n$, a subset $S$ is said to \emph{avoid $1$} if
$\|x-y\|\neq 1$ for all $x,y$ in $S$. For example, one can take the union of open
balls of radius $1/2$ with centers in $(2\Z)^n$. It is natural to
wonder how large $S$ can be, given that it avoids $1$. To be more precise, 
if $S$ is 
 a Lebesgue measurable set, its \emph{density}  $\delta(S)$ is defined  by 
\begin{equation*}
\delta(S)=\limsup_{R\to \infty} \frac{\vol([-R,R]^n\cap
  S)}{\vol([-R,R]^n)},
\end{equation*}
where $\vol(S)$ is the Lebesgue measure of $S$.  We are interested in
the supreme 
density  $m_1(\R^n)$ of the Lebesgue measurable sets avoiding $1$.

In terms of graphs, 
a set  $S$ avoiding $1$  is an independent
set of the \emph{unit distance graph}, the graph drawn on $\R^n$ that
connects by an edge  every pair of points at distance $1$, and
$m_1(\R^n)$ is a substitute for the
\emph{independence number} of this graph.

Larman and Rogers introduced in \cite{LR} the number  $m_1(\R^n)$ in order to allow
for analytic tools in the study of the \emph{chromatic number
  $\chi(\R^n)$} of the unit distance graph, i.e. the minimal number of colors
needed to color $\R^n$ so that points at distance $1$ receive
different colors. Indeed, the inequality 
\begin{equation}\label{e1}
\chi_m(\R^n)\geq \frac{1}{m_1(\R^n)}.
\end{equation}
holds, where $\chi_m(\R^n)$ denotes the 
 \emph{measurable chromatic number}  of $\R^n$. In the definition of
 $\chi_m(\R^n)$,
the measurability of the color classes is required, so
$\chi_m(\R^n)\geq \chi(\R^n)$. We note that  \eqref{e1} is the exact analogue of the well known
relation between the chromatic number $\chi(G)$ and the
independence number $\alpha(G)$ of a finite graph $G=(V,E)$:
\begin{equation*}
\chi(G)\geq \frac{|V|}{\alpha(G)}.
\end{equation*}

Following \eqref{e1}, in order to lower bound
$\chi_m(\R^n)$, it is enough to upper bound $m_1(\R^n)$.
As shown in \cite{LR}, finite configurations of points in $\R^n$ can be
used for this purpose. Indeed, if $G=(V,E)$ is a finite  induced subgraph of the unit distance graph  of $\R^n$, meaning that
$V=\{v_1,\dots,v_M\}\subset \R^n$ and $E=\{ \{i,j\}: \|v_i-v_j\|=1\}$, then
\begin{equation}\label{Larman-Rogers}
m_1(\R^n)\leq \frac{\alpha(G)}{|V|}.
\end{equation}
Combined with the celebrated Frankl and Wilson intersection theorem \cite{FW},
this inequality has lead to the  asymptotic upper bound of
$1.207^{-n}$,  proving  the exponential decrease of $m_1(\R^n)$. This
result was  later
improved to $1.239^{-n}$ in \cite{Rai} following similar
ideas. However, \eqref{Larman-Rogers} can by no means result in a lower
estimate for $\chi_m(\R^n)$ that would be tighter than that of
$\chi(\R^n)$ since the inequalities  $\chi(\R^n)\geq \chi(G)\geq
\frac{|V|}{\alpha(G)}$ obviously hold.
In \cite{SW}, a more sophisticated configuration principle was
introduced that improved the upper bounds of $m_1(\R^n)$ for
dimensions $2\leq n\leq 25$, but didn't move forward to an asymptotic improvement.

A completely different approach is taken in \cite{OV}, where an analogue of Lov\'asz theta number is
defined and computed for the unit distance graph (see also \cite{BNOV}
for an earlier approach dealing with the unit sphere of Euclidean
space). This number,  denoted here $\vartheta(\R^n)$, has an explicit
expression in terms of Bessel functions, which will be recalled in
section 2. However, the resulting upper bound of $m_1(\R^n)$ is asymptotically not as good as  Frankl and
Wilson. We introduce the  following notations, which will be used throughout this paper:

\smallskip

\noindent{\bf Notations:} Let $u_n$ and $v_n\neq 0$ be two sequences. We denote $u_n \sim v_n$ if
 $\lim u_n/v_n=1$, 
$u_n\approx v_n$ if there exists $\alpha, \beta \in \R$, $\beta > 0$,
such that 
$ u_n/v_n \sim \beta n^\alpha$ and, for positive
sequences,
$u_n\lessapprox v_n$ if there exists $\alpha ,\beta \in \R$, $\beta>0$
such that $ u_n/v_n\leq \beta n^\alpha$.

\smallskip
\noindent Then, the asymptotic behavior of $\vartheta(\R^n)$ is
\begin{equation*}
\vartheta(\R^n)\approx (\sqrt{e/2})^{-n} \lessapprox (1.165)^{-n}.
\end{equation*}
Nevertheless,  for small dimensions, $\vartheta(\R^n)$  does improve the previously
known upper bounds of $m_1(\R^n)$. 
Moreover, this bound  is further
strengthened in \cite{OV} by adding extra inequalities arising from
simplicial configurations of points, leading to the up to now tightest known
upper bounds of $m_1(\R^n)$ for $2\leq n\leq 24$ \cite[Table 3.1]{OV}. 

In this paper, we build upon the results in \cite{OV}, by considering
 more general configurations of
points. More precisely, a linear program is associated to any finite induced subgraph of the unit distance
graph $G=(V,E)$, whose optimal value
$\vartheta_G(\R^n)$ satisfies
\begin{equation*}
m_1(\R^n)\leq \vartheta_G(\R^n)\leq \vartheta(\R^n).
\end{equation*}
We prove that 
  $\vartheta_G(\R^n)$ decreases exponentially faster than both
$\vartheta(\R^n)$ and the ratio $\alpha(G)/|V|$, when $G$ is taken in the family
of graphs considered by Frankl and Wilson, or in the family of graphs
defined by Raigorodskii. We obtain the improved estimate
\begin{theorem}\label{Theorem 1}
\begin{equation}\label{final}
m_1(\R^n)\lessapprox (1.268)^{-n}.
\end{equation}
\end{theorem}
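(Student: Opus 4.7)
The plan is to instantiate the LP bound $\vartheta_G(\R^n)$ introduced above at a carefully chosen finite subgraph $G$ of the unit distance graph, and to estimate the resulting optimum asymptotically. We already have $\vartheta_G(\R^n) \leq \vartheta(\R^n) \lessapprox (1.165)^{-n}$ from the continuous side, and, at least heuristically, a bound of the order of $\alpha(G)/|V| \lessapprox (1.239)^{-n}$ from Raigorodskii's combinatorial side. The crux of the theorem is to show that the LP which \emph{simultaneously} enforces both kinds of constraints picks up an extra exponential factor relative to the minimum of the two, and that optimizing parameters pushes the rate past $\log(1.268)$.

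The first step is the choice of graph family. Following the Raigorodskii refinement of Frankl--Wilson, I would take $V$ to be the set of $\{0,1\}$-vectors of fixed Hamming weight $k=\lfloor \rho n\rfloor$ in $\{0,1\}^n$, with $v\sim w$ exactly when $\langle v,w\rangle$ equals a prescribed value tied to a prime $p$. Rescaling by the appropriate factor realizes $G$ as an induced subgraph of the unit distance graph of $\R^n$, while the Frankl--Wilson intersection theorem supplies $\alpha(G)\leq \binom{n}{p-1}$. Raigorodskii's own optimization over $(\rho,p)$ recovers the $1.239^{-n}$ ratio $\alpha(G)/|V|$, so this family is the natural testbed for the sharper bound.

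The second step is to evaluate $\vartheta_G(\R^n)$. Since $G$ is invariant under the symmetric group acting on coordinates, the LP can be symmetrized: feasibility collapses to a finite family of inequalities indexed by the orbits of $S_n$ on pairs of vertices, equivalently by the intersection numbers $|\Supp(v)\cap \Supp(w)|$. I would then construct an explicit dual-feasible pair in which the continuous part mimics the optimizer of $\vartheta(\R^n)$, built from a Bessel profile on $\R^n$, and the discrete part is a nonnegative combination of the symmetrized orbit indicators forcing the finite constraint $\alpha(G)/|V|$ into play. The objective value of this mixed certificate is the quantity to be estimated.

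The main technical obstacle is the asymptotic analysis. One must simultaneously control Bessel function asymptotics (which furnish the $(\sqrt{e/2})^{-n}$ continuous decay) and the entropic estimates via Stirling for $|V|$, $\binom{n}{p-1}$ and the orbit sizes (which furnish the Frankl--Wilson combinatorial decay), and keep track of the cross terms that arise when the two certificates are glued. After symmetrization the problem reduces to a convex minimization in the density $\rho=k/n$, the ratio $p/n$, and a spectral parameter from the Bessel profile; the saddle-point of this minimization yields the exponential rate $-\log(1.268)$, strictly better than either bound individually, and hence \eqref{final}. The delicate point is precisely that this saddle lies strictly below $\min\bigl(\log(\sqrt{e/2}),\log(1.239)\bigr)$, so the gain is not a soft consequence of $\vartheta_G\leq \min(\vartheta,\alpha/|V|)$ but requires the explicit mixing just described.
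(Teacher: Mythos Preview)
Your proposal misidentifies the structure of the linear program and therefore misses the actual mechanism of the proof. When $V$ lies on a sphere of radius $r$ centered at $0^n$, the program $\vartheta_G(\R^n)$ is already the three-variable LP \eqref{LP}: the graph $G$ enters only through the single scalar $r$ and the ratio $\alpha(G)/|V|$. There is no further $S_n$-symmetrization to perform, no orbit-indexed family of constraints, and no ``discrete part built from orbit indicators''. The constraints are the continuum of inequalities $z_0+z_1\Omega_n(t)+z_2\Omega_n(rt)\geq 0$ for $t\geq 0$, and the heart of the argument is the explicit feasible triple $(z_0,z_1,z_2)=(m^n,1,\gamma^n)$ of Lemma~\ref{lemma}. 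Proving feasibility of that triple --- i.e.\ proving $m^n+\Omega_n(t)+\gamma^n\Omega_n(rt)\geq 0$ for all $t$ --- is the real work: it requires a three-range analysis of Bessel function asymptotics (small $t$, large $t$, and an iterated contraction argument for the intermediate range) and is what produces the function $c(r)=(1+\sqrt{1-r^2})e^{-\sqrt{1-r^2}}$. Your outline contains nothing corresponding to this lemma.

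The second gap is the shape of the optimization. The upshot of Theorem~\ref{Theorem 2} is a \emph{threshold} phenomenon: once $\alpha(G_n)/M_n\lessapprox b^n$ with $b<\sqrt{2/e}$, the resulting bound is $\vartheta_{G_n}(\R^n)\lessapprox f(r)^n$ with $f(r)=\sqrt{2c(r)/e}$, a rate that depends only on $r$ and not on $b$. So the task is not a joint saddle in $(\rho,p,\text{spectral parameter})$ as you describe, but simply to minimize the embedding radius $r$ over graph families whose independence ratio beats the fixed threshold $\sqrt{2/e}$. Finally, the $\{0,1\}$-vector (Frankl--Wilson) family you propose yields only $f(r(a_0))<(1.262)^{-1}$; to reach $(1.268)^{-1}$ the paper passes to Raigorodskii's $\{-1,0,1\}^n$ graphs, which achieve a strictly smaller radius while still satisfying $b<\sqrt{2/e}$.
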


We also present numerical results in the range of dimensions $4\leq
n\leq 24$ (see Table \ref{Table2}),
where careful choices of graphs allow us to tighten the previously
known upper estimates of $m_1(\R^n)$.

For the first time, tightening a theta-like upper bound using a subgraph constraint is applied 
in a systematic way. Both our numerical results in small dimensions and the asymptotic improvement that we have 
obtained (even if further improvement with this method is intrinsically limited, see Remark \ref{rem-lim}) show the relevance of this method. We believe this is a promising method  that is worth 
to consider in other situations, in particular because it is much simpler than others such as further steps in hierarchies 
of semidefinite programs (see \cite{Las}, \cite{Lau}, and for packing graphs in topological spaces, \cite{LV}). In section 5, we show how it can be applied in the framework of compact homogeneous graphs.

\smallskip

This paper is organized as follows: in section \ref{section-LP}, we introduce 
$\vartheta_G(\R^n)$ and prove the announced inequality $m_1(\R^n)\leq \vartheta_G(\R^n)$. 
In section \ref{section-theorem 1}, we prove Theorem \ref{Theorem 1}. Section \ref{section-numericals}
is devoted to the numerical results in small dimension. Section \ref{section-theta} 
develops the method  in the  context of compact homogeneous graphs. The last section presents some open problems.

\section{A linear programming upper bound for $m_1(\R^n)$}\label{section-LP}

Our goal in this section is to generalize to arbitrary
induced subgraphs of the unit distance graph the linear programming upper bound of
$m_1(\R^n)$ introduced in \cite{OV}.  In order to formulate the result
we need some preparation.

Let $\omega$ denote the surface measure of the unit sphere
$S^{n-1}=\{x\in \R^n \ :\ \Vert x\Vert =1\}$ and let 
$\omega_n=\omega(S^{n-1})$. We will need  the Fourier transform of $\omega$, 
which is by definition the function defined for $u\in \R^n$ by $\widehat{\omega}(u)=\int_{S^{n-1}}e^{-iu\cdot
  \xi}d\omega(\xi)$. This function  is clearly
invariant under the orthogonal group $O(\R^n)$, so let  $\Omega_n(t)$ be the function of $t\geq 0$ such that, for all $u\in \R^n$,
\begin{equation*}
\Omega_n(\Vert u\Vert)=\frac{1}{\omega_n}\int_{S^{n-1}}e^{-iu\cdot \xi}
d\omega(\xi).
\end{equation*}
We note that $\Omega_n(0)=1$.
The function $\Omega_n$ expresses in terms of the Bessel function of
the first kind \cite[Chap.4 and Lemma 9.6.2]{AAR}, :
\begin{equation*}
\Omega_n(t) = 
\Gamma\left(\frac{n}{2}\right)
\left(\frac{2}{t}\right)^{\frac{n}{2}-1}
J_{\frac{n}{2}-1}(t).
\end{equation*}

In \cite[section 3.1]{OV}, the following theorem is proved:

\begin{theorem}\label{theorem OV} \cite{OV}
\begin{equation*}
\begin{array}{cl}
m_1(\R^n)\leq \inf \ \big\{ z_0+z_c \ :& z_c\geq 0, \\
& z_0+z_1+z_c(n+1)\geq 1\\
& z_0+z_1\Omega_n(t)+z_c(n+1)\Omega_n\big(t\sqrt{\frac{1}{2}-\frac{1}{2n+2}}\big)\geq 0\\
 & \text{ for all }t\geq 0\ \big\}
\end{array}
\end{equation*}
\end{theorem}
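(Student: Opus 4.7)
I plan to follow the standard Delsarte--Bochner dual strategy, adapted to the translation-invariant unit-distance graph and augmented with a regular-simplex configuration. Fix a measurable $S\subset\R^n$ avoiding $1$ with density $m=\delta(S)$, and any triple $(z_0,z_1,z_c)$ satisfying the three constraints of the theorem; the aim is to prove $m\leq z_0+z_c$.

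The first step is to form the autocorrelation
$$\Lambda_S(x)=\lim_{T\to\infty}\frac{1}{(2T)^n}\int_{[-T,T]^n}\1_S(y)\1_S(y+x)\,dy,$$
chosen via a suitable averaging subsequence so that the limit exists and is continuous. By construction $\Lambda_S$ is positive-definite, satisfies $0\leq\Lambda_S(x)\leq\Lambda_S(0)=m$, and vanishes on the unit sphere because $S$ avoids $1$. Averaging over $O(n)$ yields an $O(n)$-invariant positive-definite function $\bar\Lambda(x)=G(\|x\|)$ sharing these properties. Bochner's theorem for radial functions then produces a positive finite measure $\nu$ on $[0,\infty)$ with
$$G(t)=\int_0^\infty\Omega_n(ut)\,d\nu(u),$$
so that $\nu([0,\infty))=G(0)=m$ and $\int\Omega_n(u)\,d\nu(u)=G(1)=0$. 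A standard computation of the mean value of $\Lambda_S$ shows further that the spectral atom at the origin satisfies $\nu(\{0\})\geq m^2$.

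Next I would incorporate the simplex. Let $v_0,\dots,v_n$ be the vertices of a regular simplex centered at the origin with all edges of length $1$; its circumradius equals $r=\sqrt{1/2-1/(2n+2)}$. For every $y\in\R^n$ and every $R\in O(n)$ the set $\{y+Rv_i\}$ is a unit-edge simplex, hence $\sum_i\1_S(y+Rv_i)\leq 1$. Averaging over $R$ replaces each summand by the spherical average of $\1_S$ over the sphere of radius $r$ around $y$; multiplying by $\1_S(y)$, integrating over $[-T,T]^n$ and letting $T\to\infty$ yields the simplex inequality
$$(n+1)\,G(r)\leq m.$$

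To close the argument, set $h(u)=z_0+z_1\Omega_n(u)+z_c(n+1)\Omega_n(ru)$, which is nonnegative on $[0,\infty)$ by the third constraint. Integrating against $\nu$ and using $G(1)=0$, $z_c\geq 0$ and the simplex inequality gives
$$\int_0^\infty h(u)\,d\nu(u)=z_0 m+z_c(n+1)G(r)\leq(z_0+z_c)m.$$
On the other hand, since $h\geq 0$, $\nu\geq 0$ and $\Omega_n(0)=1$,
$$\int_0^\infty h(u)\,d\nu(u)\geq h(0)\,\nu(\{0\})\geq[z_0+z_1+z_c(n+1)]\,m^2\geq m^2,$$
where the last inequality is the second constraint. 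Combining the two estimates yields $m^2\leq(z_0+z_c)m$, hence $m\leq z_0+z_c$. The main obstacle in executing this plan rigorously is the autocorrelation framework: one has to ensure that $\Lambda_S$ is a genuine continuous positive-definite function and that the atom at the origin of its spectral measure is at least $m^2$. A standard route is to take weak$^*$ limits along a suitable subsequence of the normalized positive measures $(2T)^{-n}|\widehat{\1_{S\cap[-T,T]^n}}|^2$, or equivalently to work with an invariant mean as in almost-periodic function theory; once this is set up, the remaining steps are routine Bochner duality, with the new $z_c$ term contributed by the simplex configuration.
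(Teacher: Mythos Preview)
Your argument is essentially the paper's: form the autocorrelation of $S$, exploit its positive-definiteness via Bochner, use vanishing on the unit sphere together with the simplex inequality $(n+1)G(r)\leq m$, and assemble the pieces by LP duality. The one difference is that the paper resolves the autocorrelation technicality you flag by first reducing to \emph{periodic} sets $S$ (so that $f_S$ lives on a torus $\R^n/L$ and its Fourier series immediately yields the atom $\widehat{f_S}(0)=\delta(S)^2$), thereby avoiding any weak$^*$ limit or invariant-mean machinery.
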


Let $G=(V,E)$ a (not necessarily finite) induced subgraph of the unit
distance graph. So, $V\subset \R^n$ and  $E=\{\{x,y\} \ : \ x\in V, y\in V \text{ and } \Vert x-y\Vert=1\}$. 
We assume that $V$ is a Borel measurable set, endowed with a positive
Borel measure $\lambda$, such that $0<\lambda(V)<+\infty$. We introduce
the {\em $\lambda$-independence number} of $G$:
\begin{equation*}
\alpha_{\lambda}(G):=\sup\{\lambda(A)\ :\ A\subset V,\ A\text{
a Borel  measurable independent set of }G \}.
\end{equation*}
If $G$ is a finite graph, and $\lambda$ is the counting measure, 
we recover the usual notion of the independence number $\alpha(G)$
of $G$.

\begin{theorem}\label{theorem-general}
With the notations above,
\begin{equation*}
\begin{array}{cl}
m_1(\R^n)\leq \inf \ \big\{ z_0+z_2\frac{\alpha_{\lambda}(G)}{\lambda(V)} \ :& z_2\geq 0, \\
& z_0+z_1+z_2\geq 1\\
& z_0+z_1\Omega_n(t)+z_2\frac{1}{\lambda(V)}\int_V\Omega_n(t\Vert
v\Vert )d\lambda(v)\geq 0\\
 & \text{ for all }t\geq 0\ \big\}
\end{array}
\end{equation*}
The optimal value of this linear program will be denoted $\vartheta_G(\R^n)$.
\end{theorem}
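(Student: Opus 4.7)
The plan is to extend the argument of Theorem \ref{theorem OV} from \cite{OV} by replacing the regular simplex by the general induced subgraph $G=(V,E)$; two ingredients interact, a combinatorial independence bound that uses the avoidance of $1$, and a Fourier-positivity step that uses the LP feasibility constraints. The combinatorial ingredient is the new observation that, if $S$ avoids distance $1$, then for every $y\in\R^n$ and every rotation $g\in O(n)$ the set $\{v\in V:\ y+gv\in S\}$ is a Borel measurable independent set of $G$: two vertices $v_1,v_2$ adjacent in $G$ satisfy $\Vert gv_1-gv_2\Vert=\Vert v_1-v_2\Vert=1$, so $y+gv_1$ and $y+gv_2$ cannot both lie in $S$. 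Consequently $\int_V\1_S(y+gv)\,d\lambda(v)\le \alpha_\lambda(G)$ pointwise.

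First I would reduce to a periodic $S\subset\R^n$ of density $\delta$ arbitrarily close to $m_1(\R^n)$, so that one can work with the $L\Z^n$-periodic autocorrelation $\rho_S(x)=L^{-n}\int_{[0,L]^n}\1_S(y)\1_S(y+x)\,dy$. This function is positive-definite, satisfies $\rho_S(0)=\delta$ and $\rho_S(x)=0$ for $\Vert x\Vert=1$, and admits the Fourier expansion $\rho_S(x)=\sum_{\xi}|a_\xi|^2\,e^{ix\cdot\xi}$ with $a_0=\delta$, so the ``DC'' atom has mass $|a_0|^2=\delta^2$. Its radial average $r_S(\Vert x\Vert):=\int_{O(n)}\rho_S(gx)\,dg=\sum_{\xi}|a_\xi|^2\,\Omega_n(\Vert\xi\Vert\,\Vert x\Vert)$ keeps the properties $r_S(0)=\delta$ and $r_S(1)=0$; and pairing the combinatorial inequality with $\1_S(y)$, integrating over a fundamental domain, and then averaging the resulting estimate $\int_V \rho_S(gv)\,d\lambda(v)\le\alpha_\lambda(G)\,\delta$ over $g\in O(n)$ yields the graph bound
\[
\frac{1}{\lambda(V)}\int_V r_S(\Vert v\Vert)\,d\lambda(v)\le \frac{\alpha_\lambda(G)}{\lambda(V)}\,\delta.
\]

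For any feasible triple $(z_0,z_1,z_2)$, the LP constraints say that
\[
L(t):=z_0+z_1\,\Omega_n(t)+z_2\,\frac{1}{\lambda(V)}\int_V\Omega_n(t\Vert v\Vert)\,d\lambda(v)
\]
satisfies $L(0)=z_0+z_1+z_2\ge 1$ and $L(t)\ge 0$ for all $t\ge 0$. Weighting $L(\Vert\xi\Vert)$ by $|a_\xi|^2$, summing over $\xi$, and isolating the $\xi=0$ contribution gives
\[
\sum_{\xi}|a_\xi|^2\,L(\Vert\xi\Vert)\ge |a_0|^2\,L(0)\ge \delta^2,
\]
while by linearity the same sum equals $z_0\,\rho_S(0)+z_1\,r_S(1)+z_2\,\lambda(V)^{-1}\int_V r_S(\Vert v\Vert)\,d\lambda(v)=z_0\delta+0+z_2\,\lambda(V)^{-1}\int_V r_S(\Vert v\Vert)\,d\lambda(v)$, which by the graph bound is at most $\delta\,(z_0+z_2\,\alpha_\lambda(G)/\lambda(V))$. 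Dividing by $\delta$ yields $\delta\le z_0+z_2\,\alpha_\lambda(G)/\lambda(V)$, and then taking the infimum over feasible triples and over approximating periodic sets gives $m_1(\R^n)\le\vartheta_G(\R^n)$.

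The main obstacle is justifying the reduction to a periodic $S$ of density arbitrarily close to $m_1(\R^n)$ and still avoiding $1$; this is classical, but an alternative non-periodic route works directly with Bochner's theorem applied to the truncated autocorrelation $F_R=\1_{S\cap B_R}*\widetilde{\1_{S\cap B_R}}$, where one then has to track carefully, in the limit $R\to\infty$, the emergence of the ``DC'' atom of mass $\delta^2$ at the origin of the Fourier side. Secondary technicalities are the measurability of $\{v\in V:\ y+gv\in S\}$, which is immediate by Fubini since $S$ and $V$ are Borel, and the $\lambda$-integrability of $\Omega_n(t\Vert v\Vert)$ over $V$, which is immediate from $|\Omega_n|\le 1$ and $\lambda(V)<\infty$.
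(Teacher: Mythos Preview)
Your argument is correct and follows essentially the same route as the paper's proof: reduce to a periodic $S$, form the autocorrelation, exploit positive-definiteness on the Fourier side to get the lower bound $\delta^2$, and use the independence constraint $\lambda((S-y)\cap V)\le\alpha_\lambda(G)$ for the upper bound. The only organizational difference is that you radially average the autocorrelation $\rho_S$ (and build the rotation $g$ directly into the combinatorial observation) before comparing with $L(t)$, whereas the paper keeps $f_S$ fixed, introduces the signed measure $\mu=z_0\delta_{0^n}+z_1\tilde\omega/\omega_n+z_2\tilde\lambda/\lambda(V)$, and only at the end averages $\lambda$ over $O(\R^n)$ to turn the condition $\widehat{\mu}\ge 0$ into the radial LP constraint; the two averagings are dual to each other and lead to the same inequality.
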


Before we proceed with the proof, we would like to make a few comments
on the choice of subgraph $G$.

\smallskip
In the next sections, we will apply Theorem \ref{theorem-general} in
the special case of a subgraph $G=(V,E)$ such that $V$ is finite and
lies on the sphere of radius $r$ centered at $0^n$, and the measure $\lambda$ is the
counting measure. Then, the linear program takes the simpler form 
\begin{equation}\label{LP}
\begin{array}{cl}
\inf \ \big\{ z_0+z_2\frac{\alpha(G)}{|V|} \ :& z_2\geq 0, \\
& z_0+z_1+z_2\geq 1\\
& z_0+z_1\Omega_n(t)+z_2\Omega_n(rt)\geq 0\text{ for all }t\geq 0\ \big\}.
\end{array}
\end{equation}
In particular, if $V$ is a regular simplex with edges of length $1$, centered at $0^n$, the graph $G$ is
the complete graph of order $(n+1)$;  the change of variable
$z_2=z_c(n+1)$ in \eqref{LP}, combined with Theorem
\ref{theorem-general},  gives back  Theorem \ref{theorem OV}.

Another natural choice is to take for $V$ a sphere centered at $0^n$, endowed with its surface
measure. Of course, the exact value of the
ratio $\frac{\alpha_{\lambda}(G)}{\lambda(V)}$ is not known in
general, but one can upper bound it with a similar linear
program. This will be  explained in section \ref{section-theta}, where the more general case of compact homogeneous graphs is
discussed. 

It is also possible to take account of several graphs 
at the same time; each graph would give rise to an additional variable $z_i$.
\smallskip

Theorem \ref{theorem-general} can be obtained from a minor
modification of the argument in \cite{OV}. In order to keep this paper
self contained, we include a  proof here. 

\begin{proof}
One can come arbitrary close to
$m_1(\R^n)$, with a Borel measurable subset of $\R^n$ avoiding distance $1$,
which is moreover a periodic set. We refer to \cite[proof of Theorem
1.1]{OV} for a proof. So, let $S$ be a periodic Borel measurable subset of
$\R^n$ avoiding distance $1$, having positive density, and let $L$ denote its periodicity lattice. We consider the function:
\begin{equation*}
f_S(x):=\frac{1}{\vol(L)}\int_{\R^n/L} \1_S(x+y)\1_S(y) dy
\end{equation*}
where $\1_S(x)$ denotes the characteristic function of $S$ and integration is with respect to the Lebesgue measure.
One can verify that $f_S(0^n)=\delta(S)$, that $f_S$ is $L$-periodic  and that $\frac{1}{\vol(L)}\int_{\R^n/L} f_S(x)dx=\delta(S)^2$.
Moreover, $f_S$ is a {\em positive definite function}, meaning that, for all choice of $k$ points in $\R^n$, say 
$x_1,\dots,x_k$, the matrix with coefficients $f_S(x_i-x_j)$ is
positive semidefinite (see \cite[1.4.1]{Ru}). 

$\delta_{0^n}$ denotes the Dirac measure
at $0^n$, and  $\tilde{\omega}$, $\tilde{\lambda}$ stand for the
natural extensions of $\omega$ and $\lambda$ to $\R^n$
(i.e. for $E\subset \R^n$ a Borel set, $\tilde{\omega}(E):=\omega(E\cap
S^{n-1})$ and $\tilde{\lambda}(E):=\lambda(E\cap V)$). Let
$(z_0,z_1,z_2)\in \R^3$ and let the Borel measure 
\begin{equation*}
\mu:=z_0\delta_{0^n}+z_1\tilde{\omega}/\omega_n+z_2\tilde{\lambda}/\lambda(V).
\end{equation*}
We assume that  $z_2\geq 0$, that $\widehat{\mu}(u)\geq 0$ for all $u\in \R^n$, and that $\widehat{\mu}(0^n)\geq 1$.
Then, we claim that the following inequalities hold:
\begin{equation}\label{ineq0}
\delta(S)^2\leq \int f_S(x) d\mu(x) \leq \Big(z_0+z_2 \frac{\alpha_{\lambda}(G)}{\lambda(V)}\Big)\delta(S).
\end{equation}

To show the right hand side inequality, we observe that   $\int f_S d\delta_{0^n}=f_S(0^n)=\delta(S)$,
and, because $S$ avoids $1$, that $\int f_S d\tilde{\omega} =0$. Less obvious is the inequality 
\begin{equation*}
\int f_S(x) d\tilde{\lambda}(x) \leq \alpha_{\lambda}(G)\delta(S).
\end{equation*}
It is easily obtained from a swap of integrals following  Fubini's
theorem and from the inequality
\begin{equation*}
\int \1_S(x+y)d\tilde{\lambda}(x)=\lambda((S-y)\cap V)\leq \alpha_{\lambda}(G).
\end{equation*}

The left hand side inequality in \eqref{ineq0} follows from basic results in
Fourier ana\-lysis for which we refer to \cite[Chapter 1]{Ru}. Because
$f_S$ is continuous, positive definite on $\R^n$ and $L$-periodic, 
\begin{equation*}
f_S(x)=\sum_{\gamma\in 2\pi L^*} \widehat{f_S}(\gamma)
e^{i(\gamma\cdot x)}
\end{equation*}
where $L^*=\{x\in \R^n \ :\ x\cdot y\in \Z
\text{ fo all } y\in L\}$ is the dual lattice of $L$, and its Fourier coefficients
\begin{equation*}
\widehat{f_S}(\gamma)=\frac{1}{\vol(L)}\int_{\R^n/L}
f_S(y)e^{-i(\gamma\cdot y)}dy
\end{equation*}
are non negative numbers 
(see Bochner's theorem \cite[1.4.3]{Ru} and the inversion theorem
\cite[1.5.1]{Ru}).
Then, applying Fubini, and the assumptions on $\widehat{\mu}$,
\begin{equation*}
\int f_S(x) d\mu(x) =\sum_{\gamma\in 2\pi L^*} \widehat{f_S}(\gamma)
\widehat{\mu}(-\gamma)\geq \widehat{f_S}(0^n)=\delta(S)^2.
\end{equation*}
So, from \eqref{ineq0},
\begin{equation}\label{ineq-lambda}
\delta(S)\leq z_0+z_2 \frac{\alpha_{\lambda}(G)}{\lambda(V)}.
\end{equation}
Now, we introduce the measure $\lambda_0$, which is the average of
$\lambda$ with respect to the normalized Haar
measure $dg$  on  the orthogonal group $O(\R^n)$, and we will apply
\eqref{ineq-lambda} to $\lambda_0$.
The measure $\lambda_0$ is
defined by: $\lambda_0(E)=\int_{O(\R^n)} \tilde{\lambda}(g^{-1}(E))dg$
for any Borel set $E\subset \R^n$ and we
note that its support may be different than that of $\lambda$,
but is anyway contained in the union $V_0$ of the images of $V$ under
elements of $O(\R^n)$. We have $\lambda_0(V_0)=\lambda(V)$ and, with obvious notations, 
$\alpha_{\lambda_0}(G_0)\leq \alpha_{\lambda}(G)$. The Fourier transform of $\lambda_0$
can be written:
\begin{equation*}
\widehat{\lambda_0}(u)=\int \Omega_n(\Vert u \Vert \, \Vert
x\Vert)d\tilde{\lambda}(x) =\int_V \Omega_n(\Vert u \Vert \, \Vert
v\Vert)d\lambda(v)
\end{equation*}
so, if $\mu_0:=z_0\delta_{0^n}+z_1
\tilde{\omega}/\omega_n+z_2\lambda_0/\lambda_0(V_0)$, the conditions
that $\widehat{\mu_0}(u)\geq 0$ for all $u\in \R^n$ and
$\widehat{\mu_0}(0^n)\geq 1$ translate to:
\begin{equation*}
\begin{cases}
z_0+z_1\Omega_n(\Vert u\Vert )+z_2 \frac{1}{\lambda(V)}\int_V \Omega_n(\Vert u \Vert \, \Vert
v\Vert)d\lambda(v)\geq 0 \text{ for all }u\in \R^n\\
z_0+z_1+z_2\geq 1
\end{cases}
\end{equation*}
which amounts, together with $z_2\geq 0$,  to  $(z_0,z_1,z_2)$ being feasible for the linear
program in the theorem. Under these conditions, \eqref{ineq-lambda}
holds for
$\lambda_0$ and concludes the proof.
\end{proof}

We  recall from \cite[section 3]{OV} and \cite{Oli} that the linear program
obtained from $\vartheta_G(\R^n)$ when the variable $z_2$ is set to
$0$, can be solved in full generality and that its optimal value,
denoted here $\vartheta(\R^n)$,
has the explicit expression:
\begin{equation}\label{theta}
\vartheta(\R^n)=\frac{-\Omega_n(j_{n/2,1})}{1-\Omega_n(j_{n/2,1})}
\end{equation}
where $j_{n/2,1}$ is the first positive zero of $J_{n/2}$ and is the
value at which the function $\Omega_n$ reaches its absolute minimum
(see Figure 1 for a plot of $\Omega_4(t)$).
\begin{figure}
\centering
\includegraphics[width=0.7\textwidth]{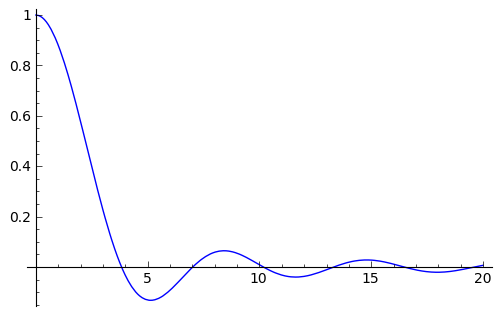}
\caption{$\Omega_4(t)$}
\end{figure}
In particular, we have the inequalities:

\begin{equation*}
m_1(\R^n)\leq \vartheta_G(\R^n)\leq \vartheta(\R^n).
\end{equation*}

Unfortunately, the program $\vartheta_G(\R^n)$ 
cannot be solved explicitly in a similar fashion. Instead, we will
content ourselves with the construction of explicit feasible solutions
in section \ref{section-theorem 1} and with numerical solutions in
section \ref{section-numericals}.

\section{The proof of Theorem 1}\label{section-theorem 1}

In this section we will show that from Theorem \ref{theorem-general} an asymptotic
improvement of the known upper bounds for $m_1(\R^n)$ can be
obtained. For this, we assume that $G_n=(V_n,E_n)$ is  a sequence of
induced subgraphs 
of the unit distance graph of dimension $n$, such that $|V_n|=M_n$ and 
$V_n$ lies on the sphere of radius $r<1$ centered at $0^n$, where $r$ is independent of
$n$. We recall from Theorem \ref{theorem-general} and \eqref{LP} that $m_1(\R^n)\leq
\vartheta_{G_n}(\R^n)$  where $\vartheta_{G_n}(\R^n)$ is the optimal
value of:
\begin{equation}\label{thetaG}
\begin{array}{cl}
\inf\big\{ z_0+z_2\frac{\alpha(G_n)}{M_n} :\
 & z_2\geq 0\\
& z_0+z_1+z_2 \geq 1\\
& z_0+z_1\Omega_n(t)+z_2 \Omega_n(rt) \geq 0 \  (t> 0)\big\}.
\end{array}
\end{equation}
So, in order to upper bound $m_1(\R^n)$, it is enough to construct a
 feasible solution of \eqref{thetaG}. One that is suitable for our
 purpose is given in the following lemma:

\begin{lemma}\label{lemma} For $0<r<1$, let $c(r)$ be defined by: 
\begin{equation*}
c(r) :=(1+\sqrt{1-r^2})\e^{-\sqrt{1-r^2}}
\end{equation*} 
(see the plot of this function in Figure 2). 

Let
$\gamma > \sqrt{c(r)}$ and $m > \gamma \sqrt{2/\e}$;  there
exists $n_0\in \N$ such that for all $n\geq n_0$, 
\begin{equation}\label{ineq}
m^n+ \Omega_n(t)+\gamma^n\Omega_n(rt)\geq 0, 
\text{ for all } t \geq 0.
\end{equation}
\end{lemma}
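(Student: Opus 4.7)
My plan is to deduce \eqref{ineq} from careful asymptotics of $\Omega_n$. Writing $\nu = n/2$, I combine the defining formula $\Omega_n(t) = \Gamma(\nu)(2/t)^{\nu-1}J_{\nu-1}(t)$ with Debye's expansion $J_\nu(\nu u) \sim (2\pi\nu\sqrt{1-u^2})^{-1/2}\exp(\nu\sqrt{1-u^2})(u/(1+\sqrt{1-u^2}))^\nu$ (valid for $0 < u < 1$ bounded away from $1$) and Stirling's formula for $\Gamma(\nu)$. A direct computation then yields, up to polynomial factors in $n$,
\begin{equation*}
\Omega_n(u\nu) \approx \bigl(2/(e\,c(u))\bigr)^{n/2} \qquad (0 < u < 1).
\end{equation*}
This explains the appearance of $c$ in the statement: at $u = 1$ one has $c(1) = 1$ and the right-hand side reads $(\sqrt{2/e})^n$, matching the known order $|\Omega_n(j_{n/2, 1})| \approx (\sqrt{2/e})^n$ of the global minimum. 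I would also use the standard estimate $|J_\nu(t)| \leq \sqrt{2/(\pi t)}$ valid for $t \geq \nu$, which translates into $|\Omega_n(t)| \lessapprox (2\nu/(et))^{n/2}$ beyond the first Bessel zero.

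I would then split $t \geq 0$ at the two critical points $t = j_{n/2, 1}$ and $t = j_{n/2, 1}/r$. On the first interval $[0, j_{n/2, 1}]$, the identity $\Omega_n'(t) = -\Gamma(n/2)\,2^{n/2-1}\,t^{-(\nu-1)}J_\nu(t)$ shows that $\Omega_n$ is monotonically decreasing there. Moreover for $n$ large one has $r\,j_{n/2, 1} < j_{n/2-1, 1}$, so $\Omega_n(r\,j_{n/2, 1}) > 0$. Applying monotonicity separately to $\Omega_n(t)$ and $\Omega_n(rt)$,
\begin{equation*}
\Omega_n(t) + \gamma^n\Omega_n(rt) \geq \Omega_n(j_{n/2, 1}) + \gamma^n\Omega_n(r\,j_{n/2, 1}),
\end{equation*}
and inserting the asymptotics produces a lower bound of the form $(2/e)^{n/2}\bigl[-P_1(n) + P_2(n)(\gamma^2/c(r))^{n/2}\bigr]$. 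The hypothesis $\gamma^2 > c(r)$ makes $(\gamma^2/c(r))^{n/2}$ grow exponentially, dominating the polynomial $P_1$, so for $n$ large this is already nonnegative and \eqref{ineq} is immediate (without even using the $m^n$ term).

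For $t \geq j_{n/2, 1}/r$, the oscillatory bound yields $|\Omega_n(t)| \lessapprox (2r/e)^{n/2}$, exponentially smaller than $(2/e)^{n/2}$, while the global minimum bound gives $\Omega_n(rt) \geq -P_3(n)(2/e)^{n/2}$. Hence the sum is bounded below by $-P_3(n)\gamma^n(2/e)^{n/2}$ plus a lower-order term, and the strict hypothesis $m > \gamma\sqrt{2/e}$ makes $m^n$ exponentially larger than $\gamma^n(2/e)^{n/2}$ for $n$ large, comfortably absorbing the polynomial factor.

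The intermediate range $(j_{n/2, 1}, j_{n/2, 1}/r)$ is the main technical obstacle, since neither function is close to its global minimum simultaneously. I would handle it by further splitting at $rt = j_{n/2-1, 1}$: for $rt < j_{n/2-1, 1}$ the term $\gamma^n\Omega_n(rt)$ is positive and, combined with the oscillatory-decay bound on $\Omega_n(t)$, yields the required nonnegativity (once again exploiting $\gamma^2 > c(r)$); for $rt \geq j_{n/2-1, 1}$ both terms are bounded using the global minimum and oscillatory estimates, and $m > \gamma\sqrt{2/e}$ closes the estimate. Uniformly tracking the polynomial prefactors through the transition regions near $t/\nu = 1$ and $rt/\nu = 1$ is where the bulk of the technical work concentrates.
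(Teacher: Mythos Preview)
Your overall architecture — split $t\geq 0$ into regions according to where $\Omega_n$ and $\Omega_n(r\,\cdot)$ are monotone, positive, or oscillatory, and dominate each region separately — is exactly the paper's. But the intermediate region, which you correctly flag as the hard part, is where your outline and the paper's proof diverge, and your treatment there has a genuine gap.

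On your sub-interval with $rt<j_{n/2-1,1}$, you assert that positivity of $\gamma^n\Omega_n(rt)$ together with the decay bound on $|\Omega_n(t)|$ gives nonnegativity via $\gamma^2>c(r)$. This fails near the right endpoint: as $rt\to j_{n/2-1,1}^-$ one has $\Omega_n(rt)\to 0^+$, so mere positivity is useless, and you cannot invoke the Debye expansion there because $rt/\nu\to 1$, which is precisely the transition region where that expansion is not valid (it is not a matter of tracking polynomial prefactors — the exponential rate itself degenerates). A related slip: the bound $|J_\nu(t)|\leq\sqrt{2/(\pi t)}$ is false for $t$ close to $\nu$ (there $J_\nu(\nu)\asymp\nu^{-1/3}$, not $\nu^{-1/2}$); the paper uses the cruder $|J_\nu|\leq 1$, which is what you actually need.

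The paper's way around the transition region is the main idea you are missing. With $\nu=n/2-1$, Step~1 covers $[0,\nu]$ and Step~2 covers $[\alpha_0\nu,\infty)$ with $\alpha_0=1/\gamma^2$, leaving a gap $(\nu,\alpha_0\nu)$ only when $\gamma<1$. To close it, the paper builds a contraction $\phi(x)=\tfrac12(c(rx)/\gamma^2+x)$ on $[0,1/r]$ and iterates $\alpha_{k+1}=\phi(\alpha_k)$; each step covers $[\alpha_{k+1}\nu,\alpha_k\nu]$ using only the crude bound $|J_\nu|\leq 1$ for $\Omega_n(t)$ and the Debye asymptotic for $\Omega_n(rt)$ evaluated at the single fixed argument $r\alpha_k$, which stays in a compact subset of $(0,1)$ since $r\alpha_k\leq r/\gamma^2<r/c(r)<1$. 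Because $\phi$ has a unique fixed point $l<1$, finitely many iterations reach $\alpha_{k_0}<1$. The iteration thus reduces the whole proof to pointwise Debye at finitely many fixed arguments, never touching the transition zone.
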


\begin{figure}
 \centering
 \includegraphics[width=0.7\textwidth]{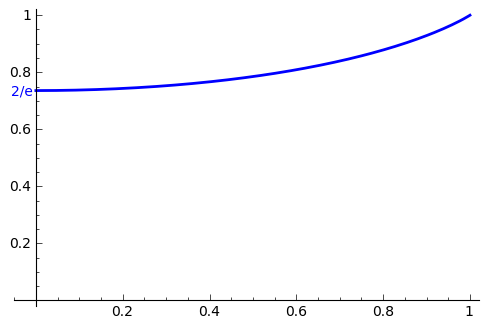}
 \caption{$c(r)$}
 \end{figure}

\begin{proof} After having established some preliminary inequalities,
  we will proceed in three steps. First, we will prove that the
  inequality \eqref{ineq} holds for ``small'' $t$, say 
$0\leq t \leq \nu := \frac{n}{2}-1$,
  then that it holds for ``large'' $t$, say $t \geq \alpha_0\nu$
  where $\alpha_0$ is an explicit constant, and, at last, we will construct
  a decreasing sequence $\alpha_0\geq \alpha_1\geq \ldots \geq
  \alpha_k\ldots$ such that the inequality holds for $t\geq \alpha_k\nu$,
and prove that $\lim_{k\rightarrow \infty}
 \alpha _k < 1$.

Let $j_{n/2,1}$ be the first zero of 
$J_{\nu+1}$, then $\Omega_n$ is a decreasing function  on 
$[0,j_{n/2,1}]$ and $\Omega_n$ has a  global minimum at  $j_{n/2,1}$ (it follows from \cite[(4.6.2), (4.14.1)]{AAR} and \cite[15.31]{Wat},
a detailed proof is given in \cite[sec. 4.3, (4.17)]{Oli}).
So,  $ \Omega_n(t)
\geq  \Omega_n(j_{n/2,1})$. Furthermore, $\vert J_\nu(t)\vert \leq 1$ 
for all $t\in \R$
(see \cite[formula 9.1.60]{AS}), hence
$$
\vert  \Omega_n(j_{n/2,1})\vert \leq \Gamma\left(\frac{n}{2}\right)
\left(\frac{2}{j_{n/2,1}}\right)^{\frac{n}{2}-1}.
$$
We apply the inequality $j_{n/2,1}> n/2$ (\cite[15.3 (1)]{Wat})
and  Stirling formula, which with our notation reads:
\begin{equation}\label{eq stirling}
\Gamma(n)\approx \left(\frac{n}{\e}\right)^n,
\end{equation}
and we get
\begin{equation}\label{eq11}
\vert  \Omega_n(j_{n/2,1})\vert \lessapprox \left(\sqrt{\frac{2}{\e}}\right)^n.
\end{equation}

Let $x\in ]0,1[$ and let us recall \cite[formula 9.3.2]{AS} (see also \cite[8.4 (3)]{Wat}):
\begin{equation}\label{eq11.0}
J_{\nu}(\nu \sech \alpha) \sim \frac{\e^{\nu(\tanh \alpha-\alpha)}}{\sqrt{2\pi\nu\tanh \alpha}} \qquad (\alpha >0,\ \nu\to +\infty).
\end{equation}
Setting $x=\sech \alpha$, \eqref{eq11.0} with our notation, leads to:
\begin{equation}\label{eq11.1}
J_{\nu}(x\nu) \approx \left(\sqrt{\frac{x}{c(x)}}\right)^n.
\end{equation}
We note that \eqref{eq11.0} shows that, for $n$ sufficiently large (possibly depending on $x$), $J_{\nu}(x\nu)$, and thus $\Omega_n(x\nu)$ is positive.

Combining \eqref{eq11.1} and \eqref{eq stirling}, we get
\begin{equation} \label{eq1}
\Omega_n(x\nu) = \Gamma\left(\frac{n}{2}\right)
\left(\frac{2}{x\nu}\right)^{\nu}
J_{\nu}(x\nu) \approx \left(\sqrt{\frac{2}{\e c(x)}}\right)^n.
\end{equation}

\noindent{\sl First step.} Suppose that $0\leq t \leq \nu$. Since
$\nu \leq j_{n/2,1}$ and $r < 1$, and since $\Omega_n$ is decreasing on $[0,j_{n/2,1}]$, it follows that 
$\Omega_n(rt) \geq \Omega_n(r\nu)$, so we have the inequality:
$$\Omega_n(t)+\gamma^n\Omega_n(rt) \geq -\vert
\Omega_n(j_{n/2,1})\vert+\gamma^n\Omega_n(r\nu).$$
We assume that $\Omega_n(r\mu)\geq 0$, which holds for $n$ sufficiently large as a consequence of \eqref{eq11.0}.
 From \eqref{eq11}, \eqref{eq1}, and the assumption $\gamma>\sqrt{c(r)}$,  the second term of the right hand side has 
asymptotically the largest absolute value, so, for $n$ greater  than
some value $m_0$, the sign of 
the right hand side is that of $\Omega_n(r\nu)$, hence is positive. So, for $n\geq m_0$, and for all $t\in [0,\nu]$, we have
\begin{equation*}
\Omega_n(t)+\gamma^n\Omega_n(rt)\geq 0.
\end{equation*}

\noindent{\sl Second step.} Let $\alpha_0=\frac{1}{\gamma^2}$. 
For $t\geq \alpha_0\nu$, because $|J_{\nu}(t)|\leq 1$,
$$\vert\Omega_n(t)\vert \leq  \Gamma\left(\frac{n}{2}\right)
\left(\frac{2}{\alpha_0\nu}\right)^{\frac{n}{2}-1}
\approx 
\left(\gamma\sqrt{\frac{2}{\e}}\right)^n.
$$
Since $\Omega_n(rt) \geq -\vert  \Omega_n(j_{n/2,1})\vert $ and $\vert
\Omega_n(j_{n/2,1})\vert \lessapprox \left(\sqrt{\frac{2}{\e}}\right)^n$, it
follows from the assumption $m>\gamma\sqrt{2/\e}$ that
$$m^n+\Omega_n(t)+\gamma^n\Omega_n(rt) \geq m^n -\vert\Omega_n(t)\vert
-\gamma^n\vert  \Omega_n(j_{n/2,1})\vert \sim m^n$$
and hence that, for $n$ greater that some $m_1$, and for all $t\geq \alpha_0\nu$,
\begin{equation*}
m^n+\Omega_n(t)+\gamma^n\Omega_n(rt) \geq 0.
\end{equation*}

\noindent{\sl Third step.} Let us first study the function $c$. An elementary 
computation gives $c'(x) = x\e^{-\sqrt{1-x^2}}$ 
for $x\in
[0,1]$. It implies that $0\leq c'(x) \leq 1$, hence $c$ is an
increasing function and $c(x) \geq x$ with equality only for $x =
1$. Now, let us define $\phi$ by
$$
\begin{array}{rcl}
\phi : [0,\frac{1}{r}] & \longrightarrow & \R\\
x & \mapsto & \frac12(\frac{c(rx)}{\gamma^2}+x)
\end{array}
$$
Since $c$ is increasing, $\phi$ is also increasing. Furthermore,
$\phi(0) = \frac1{\e\gamma^2} > 0$ and $\phi(\frac{1}{r}) = 
\frac12(\frac{1}{\gamma^2}+\frac{1}{r}) <\frac{1}{r}$ since $\gamma^2
> c(r) > r$. It follows that the interval $[0,\frac{1}{r}]$ is mapped
into itself. One also gets immediately $\phi'(x) = 
\frac12(\frac{rc'(rx)}{\gamma^2}+1)$. Since $c'(rx) \leq 1$ and
$\gamma^2> r$, we have $\phi'(x) < 1$. Hence by Banach fixed point theorem, $\phi$ has only one fixed point,
denoted by $l$ and, for any
$x_0\geq l$, the sequence $\{x_k,\, k\geq 0\}$ defined by $x_{k+1} = \phi(x_k)$ is a decreasing
sequence with limit $l$. Moreover, $\phi(1)<1$, so $l<1$.

We now return to the proof of the lemma. We have set
$\alpha_0=\frac{1}{\gamma^2}$. If $\alpha_0\leq 1$, taking account of the previous steps, we are done, so, we assume $\alpha_0>1$. 
Let $\alpha_1 < \alpha_0$ and
$t\in [\alpha_1\nu,\alpha_0\nu]$. By construction, $ r\alpha_0=
\frac{r}{\gamma^2}< \frac{r}{c(r)} < 1$, hence $rt \leq r\alpha_0\nu
<\nu \leq j_{n/2,1}$. Since $\Omega_n$ is decreasing on $[0,j_{n/2,1}]$, \eqref{eq1} gives
$$\Omega_n(rt) \geq \Omega_n(r\alpha_0\nu) \approx 
\left(\sqrt{\frac{2}{\e c(r\alpha_0)}}\right)^n.$$
Now $\vert\Omega_n(t)\vert \leq  \Gamma\left(\frac{n}{2}\right)
\left(\frac{2}{\alpha_1\nu}\right)^{\frac{n}{2}-1}
\approx 
\left(\sqrt{\frac{2}{\e\alpha_1}}\right)^n$. Hence, with the same reasoning as in step 1, we will have 
$\Omega_n(t)+\gamma^n\Omega_n(rt) \geq 0$, for $n$ greater than some value $m_2$,
 as soon as $\alpha_1 > \frac{c(r\alpha_0)}{\gamma^2}$ (here we need
 strict inequality). In order to achieve this constraint, we can take
 $\alpha_1= \phi(\alpha_0)$. Defining the sequence $\{\alpha_k, k\geq 0\}$ by the recursive formula
$\alpha_{k+1} = \phi(\alpha_k)$, we get, using the same method, that
for every $k\geq 1$, there exists $m_{k+1}$, such that for all $n>m_{k+1}$,
$\Omega_n(t)+\gamma^n\Omega_n(rt) \geq 0$ for $t \geq
\alpha_k\nu$. Since $\lim \alpha_k= l < 1$, there exists an integer  $k_0$ such that
$\alpha_{k_0}< 1$. 

\noindent{\sl Conclusion.} With the three steps above, we have covered
the whole range $t\geq 0$ by a finite number of intervals, and proved
the wanted result on each of them. All together, we have that, 
for $n\geq n_0:=\max(m_0,m_1,\dots,m_{k_0+1})$, 
$m^n+\Omega_n(t)+\gamma^n\omega_n(rt)\geq 0$ for all $t\geq 0$.
\end{proof}

\begin{theorem}\label{Theorem 2}
We assume that, for some $b<\sqrt{2/\e}$,
\begin{equation}\label{bound b}
\frac{\alpha(G_n)}{M_n}\lessapprox b^n.
\end{equation}
Let 
\begin{equation*}
c(r) =(1+\sqrt{1-r^2})\e^{-\sqrt{1-r^2}}\quad \text{ and} \quad f(r)=\sqrt{2c(r)/\e}.
\end{equation*}
Then, for every $\epsilon>0$, 
\begin{equation}\label{bound thetaG}
\vartheta_{G_n}(\R^n)\lessapprox (f(r)+\epsilon)^n.
\end{equation}
\end{theorem}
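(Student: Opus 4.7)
The plan is to construct an explicit feasible solution of the linear program \eqref{thetaG} whose objective value is $\lessapprox (f(r)+\epsilon)^n$. Motivated directly by the inequality produced by Lemma \ref{lemma}, the natural choice is
\[
z_0 = m^n, \qquad z_1 = 1, \qquad z_2 = \gamma^n,
\]
for parameters $\gamma > \sqrt{c(r)}$ and $m > \gamma\sqrt{2/\e}$ to be calibrated in terms of $\epsilon$.

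With these choices feasibility is essentially immediate. The sign condition $z_2 \geq 0$ and the normalization $z_0+z_1+z_2 \geq 1$ hold trivially since $m,\gamma>0$. The functional inequality
\[
z_0 + z_1\Omega_n(t) + z_2\Omega_n(rt) = m^n + \Omega_n(t) + \gamma^n\Omega_n(rt) \geq 0 \quad (t\geq 0)
\]
is exactly the content of Lemma \ref{lemma} and holds for all $n \geq n_0$. The corresponding objective value is then
\[
z_0 + z_2\frac{\alpha(G_n)}{M_n} = m^n + \gamma^n\frac{\alpha(G_n)}{M_n} \lessapprox m^n + (\gamma b)^n,
\]
using the hypothesis \eqref{bound b}.

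The remaining step is to calibrate $\gamma$ and $m$ so that both $m$ and $\gamma b$ are strictly less than $f(r)+\epsilon$. Observe that the boundary values $\gamma = \sqrt{c(r)}$ and $m = \gamma\sqrt{2/\e}$ would give exactly $m = \sqrt{2c(r)/\e} = f(r)$; since the lemma requires strict inequalities I would set $\gamma = \sqrt{c(r)}(1+\eta_1)$ and $m = \gamma\sqrt{2/\e}(1+\eta_2)$ with $\eta_1,\eta_2 > 0$ small enough that $m < f(r)+\epsilon/2$. The key observation is that $b < \sqrt{2/\e}$ forces $\gamma b < \gamma\sqrt{2/\e} < m$, so $(\gamma b)^n$ is exponentially dominated by $m^n$; hence both terms are bounded by $(f(r)+\epsilon/2)^n$ up to a polynomial factor.

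I expect no serious obstacle here; the main technical care lies in the bookkeeping of the $\lessapprox$ symbol, specifically absorbing the polynomial prefactor coming from the hypothesis $\alpha(G_n)/M_n \lessapprox b^n$ into the exponential slack by passing from $f(r)+\epsilon/2$ to $f(r)+\epsilon$, and verifying that the strict inequalities $\gamma > \sqrt{c(r)}$ and $m > \gamma\sqrt{2/\e}$ demanded by Lemma \ref{lemma} are compatible with making $m$ arbitrarily close to $f(r)$. Once this calibration is done, Theorem \ref{Theorem 2} follows at once from the feasibility of $(z_0,z_1,z_2)$ for \eqref{thetaG}.
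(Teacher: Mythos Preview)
Your proposal is correct and follows essentially the same route as the paper: choose $(z_0,z_1,z_2)=(m^n,1,\gamma^n)$ with $\gamma$ slightly above $\sqrt{c(r)}$ and $m$ slightly above $\gamma\sqrt{2/\e}=f(r)$, invoke Lemma~\ref{lemma} for feasibility, and use $b<\sqrt{2/\e}$ to ensure $\gamma b<m$ so that the second term $(\gamma b)^n$ is dominated by $m^n\le (f(r)+\epsilon)^n$. The only cosmetic difference is that the paper takes the additive calibration $\gamma=\sqrt{c(r)}+\epsilon$, $m=f(r)+\epsilon$ directly (for which $m>\gamma\sqrt{2/\e}$ holds because $\sqrt{2/\e}<1$), while you use a multiplicative one; both work.
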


\begin{proof} Let $\epsilon>0$; let
$\gamma=\sqrt{c(r)}+\epsilon$ and $m=\sqrt{2c(r)/\e}+\epsilon$.
Lemma \ref{lemma} shows that for $n$ sufficiently large,
$(z_0,z_1,z_2)=(m^n, 1,\gamma^n)$ is a feasible solution of
\eqref{thetaG}. So, for these values of $n$, the optimal value of
\eqref{thetaG} is upper bounded by $m^n+\gamma^n \alpha(G_n)/M_n$,
leading to
\begin{equation*}
\vartheta_{G_n}(\R^n)\lessapprox \big(\sqrt{2c(r)/\e}+\epsilon\big)^n+\big(b\sqrt{c(r)}+\epsilon\big)^n
\lessapprox (f(r)+\epsilon)^n.
\end{equation*}

\end{proof}

In order to complete the proof of Theorem
\ref{Theorem 1}, we will apply Theorem \ref{Theorem 2} to a certain sequence of graphs introduced 
by Raigorodskii in \cite{Rai}. Before that, we introduce the family of {\em generalized Johnson
  graphs} and recall Frankl and Wilson upper bound on their independence number. We will explain with full details how this bound, combined with Theorem \ref{Theorem 2}, reaches the inequality
\begin{equation*}
m_1(\R^n)\lessapprox (1.262)^{-n}.
\end{equation*}
Then, we will proceed to the graphs considered in \cite{Rai}, which
will allow us to obtain the slightly better bound announced in Theorem
\ref{Theorem 1} with similar techniques.

\begin{definition}\label{def Johnson}
We denote $J(n,w,i)$ and call \emph{generalized Johnson graph}  the graph with vertices the set of
$n$-tuples of $0$'s and $1$'s, with $w$ coordinates equal to
$1$, and with edges connecting pairs of $n$-tuples having exactly $i$
coordinates in common equal to $1$. 
\end{definition}

An upper bound of
$\alpha(J(n,w,i))$ is provided
by Frankl and Wilson intersection
theorem \cite{FW} and applies for certain values of the parameters $w$ and $i$:

\begin{theorem}\label{Theorem FW}\cite{FW}
If $q$ is a power of a prime number, 
\begin{equation*}
\alpha(J(n,2q-1,q-1))\leq \binom{n}{q-1}.
\end{equation*}
\end{theorem}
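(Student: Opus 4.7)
The plan is to prove the Frankl--Wilson theorem by the classical polynomial (linear algebra) method, working modulo the prime power $q$. Let $\mathcal{F}$ be an independent set of $J(n,2q-1,q-1)$ and, for each $A\in \mathcal{F}$, let $v_A\in\{0,1\}^n$ be the characteristic vector of $A$. The independence condition means $v_A\cdot v_B = |A\cap B|\neq q-1$ for all $A\neq B$, while $v_A\cdot v_A = 2q-1$.

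To each $A\in\mathcal{F}$ I would associate the polynomial
\begin{equation*}
P_A(x) \;=\; \prod_{\ell=0}^{q-2}\bigl(v_A\cdot x - \ell\bigr)\;\in\;\mathbb{F}_q[x_1,\dots,x_n],
\end{equation*}
which has degree $q-1$. The first key step is to show that the evaluation matrix $\bigl(P_A(v_B)\bigr)_{A,B\in\mathcal{F}}$ is diagonal modulo $q$. Indeed $P_A(v_A)=\prod_{\ell=0}^{q-2}(2q-1-\ell)=\prod_{m=q+1}^{2q-1}m$, which modulo $q$ is $(-1)^{q-1}(q-1)!\not\equiv 0$. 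For $A\neq B$ the integer $|A\cap B|\in\{0,\dots,2q-2\}\setminus\{q-1\}$ reduces mod $q$ to some element of $\{0,1,\dots,q-2\}$, so one of the factors $v_A\cdot v_B-\ell$ is divisible by $q$ and $P_A(v_B)\equiv 0 \pmod q$. It then follows that the $P_A$'s, viewed as elements of the space of $\mathbb{F}_q$-valued functions on the characteristic vectors $\{v_B\}_{B\in\mathcal{F}}$, are linearly independent over $\mathbb{F}_q$.

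The second, subtler step — and the one I expect to be the real obstacle — is to reduce from the naive dimension bound $\sum_{i=0}^{q-1}\binom{n}{i}$ to the sharper $\binom{n}{q-1}$. For this I would use the uniformity of the sets: since each $|A|=2q-1\equiv -1\pmod q$, we have $\sum_{i=1}^n x_i \equiv -1\pmod q$ on every characteristic vector $v_B$ with $B\in\mathcal{F}$. This means that constant terms can be replaced by $-\sum_i x_i$ without changing the values $P_A(v_B)$ modulo $q$. Applying this homogenization degree by degree (and using $x_i^2=x_i$ to restrict to multilinear polynomials), I turn each $P_A$ into a multilinear homogeneous polynomial of degree exactly $q-1$, without altering the evaluation matrix modulo $q$.

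To conclude, the space of multilinear homogeneous polynomials of degree $q-1$ in $n$ variables has dimension $\binom{n}{q-1}$, spanned by the monomials $\prod_{i\in S}x_i$ with $|S|=q-1$. Since the $P_A$'s live inside this space after the homogenization and are $\mathbb{F}_q$-linearly independent, we obtain $|\mathcal{F}|\leq \binom{n}{q-1}$, which is the desired bound on $\alpha(J(n,2q-1,q-1))$.
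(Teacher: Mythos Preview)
The paper does not prove this result; it is quoted from \cite{FW}. Your argument is the standard linear-algebra proof and is correct when $q$ is prime, but it breaks when $q$ is a proper prime power, which the statement explicitly allows (and which the paper actually uses, e.g.\ $q=4$ in Table~\ref{Table1}). The issue is the diagonal value: you correctly compute $P_A(v_A)\equiv (q-1)!\pmod q$, but for $q=p^a$ with $a\ge 2$ this is either zero (e.g.\ $q=8,9,16,25,\dots$) or a zero-divisor (e.g.\ $q=4$, where $3!\equiv 2\pmod 4$). In either case the triangular argument for linear independence collapses. A related slip is writing $\mathbb{F}_q[x_1,\dots,x_n]$ while in fact reducing integers modulo $q$: the ring $\Z/q\Z$ is not the field $\mathbb{F}_q$ when $a\ge 2$, and there is no ring map $\Z\to\mathbb{F}_q$ through which your evaluations factor.

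The classical repair is to work modulo the prime $p$ rather than modulo $q$, and to replace the product $\prod_{\ell=0}^{q-2}(v_A\cdot x-\ell)$ by the integer-valued function $B\mapsto\binom{|A\cap B|}{q-1}$. Since $q-1=p^a-1$ has all base-$p$ digits equal to $p-1$, Lucas' theorem gives $\binom{j}{q-1}\not\equiv 0\pmod p$ for $j\in\{0,\dots,2q-1\}$ exactly when $j\in\{q-1,\,2q-1\}$; thus $\binom{|A\cap B|}{q-1}\equiv 0\pmod p$ for distinct $A,B\in\mathcal{F}$, while $\binom{2q-1}{q-1}\equiv 1\pmod p$. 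Moreover the combinatorial identity $\binom{|A\cap B|}{q-1}=\sum_{S\subset A,\ |S|=q-1}\1_{\{S\subset B\}}$ places each function $f_A$ directly in the $\mathbb{F}_p$-span of the $\binom{n}{q-1}$ indicators $B\mapsto\1_{\{S\subset B\}}$, so the homogenization step you flagged as delicate becomes unnecessary and $|\mathcal{F}|\le\binom{n}{q-1}$ follows at once.
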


Standard results on the density of prime numbers ensure that, for all $a>0$, there exists a sequence of primes that grow like $an$.
Indeed, the prime number theorem states
that  the $m$-th prime number  $\pi(m)$ satisfies $\pi(m) \sim
m\ln(m)$ where $\ln(t)$ denotes  the natural base-$e$ logarithm function  (see \cite{arias} for a historical survey). 
Since $m\ln(m)$ is strictly increasing 
to infinity, for any $n\in \N$ there exists $m_n \geq 1$ such that 
\begin{equation*}
m_n\ln(m_n) \leq an < (m_n+1)\ln(m_n+1).
\end{equation*}
In particular $an \sim m_n\ln(m_n)$.
We set $p_n = \pi(m_n)$, and then $p_n \sim m_n\ln(m_n) \sim an$.

Taking $q=p_n\sim an$, with $a<1/4$ and $H(a):=-a\ln(a)-(1-a)\ln(1-a)$, Theorem \ref{Theorem FW} leads to 
\begin{equation}\label{bound FW}
\frac{\alpha(J(n,2q-1,q-1))}{|J(n,2q-1,q-1)|}\leq
\frac{\binom{n}{q-1}}{\binom{n}{2q-1}}\approx \e^{-(H(2a)-H(a))n}.
\end{equation}
Moreover, 
this upper bound is optimal in the sense that it cannot be tighten by an exponential factor
(see \cite{AK}). The optimal choice of $a$, i.e. the value of $a$  that maximizes $H(2a)-H(a)$ is  $a=(2-\sqrt{2})/4$, from which one obtains the
upper estimate $(1.207)^{-n}$. Let us recall that this result gave
the first  lower estimate of exponential growth for the chromatic
number of $\R^n$ \cite{FW}.

Let  $G_n=J(n,2p_n-1,p_n-1)$ where $p_n$ is, as above,  a sequence
of prime numbers such that $p_n \sim an$. The value of  $a<1/4$ will be chosen later
in order to optimize the resulting bound \eqref{bound thetaG}
(interestingly, it will turn to be different than the one that optimizes \eqref{bound FW}).
So, we have $M_n=\binom{n}{2p_n-1}$ and the constant $b$ in \eqref{bound b} is $b(a)=\e^{-(H(2a)-H(a))}$.

From $G_n$, we construct  unit distance graphs  in $\R^n$ by assigning 
the real value $t_0$ to the $0$-coordinates and the real value $t_1$ to the $1$-coordinates.
The squared Euclidean distance between two vertices is 
equal to $2(t_0-t_1)^2p_n$ so, assuming $t_0\geq t_1$, we must have
$t_0=t_1+1/\sqrt{2p_n}$. The vertices then belong to the sphere
centered at $0^n$ and of radius $r$ with $r^2=t_0^2(n-2p_n+1)+t_1^2w=t_1^2n+2t_1(n-2p_n+1)/\sqrt{2p_n}+(n-2p_n+1)/2p_n$.
So, we obtain infinitely many induced subgraphs of the unit distance
graph, all of them being combinatorially equivalent to $G_n$, and realizing every
radius $r$ such that 
\begin{equation*}
r\geq
r_{\min}(n,p_n):=\sqrt{ \frac{(n-2p_n+1)(2p_n-1)}{2n p_n }}
\sim
r(a):=\sqrt{1-2a}.
\end{equation*} 

The function
$f(r(a))=\sqrt{2c(r(a))/\e}$ is decreasing with $a$, so we will take
the largest possible value for $a$, under the constraint $b(a)\leq
\sqrt{2/\e}$. Let this value be denoted $a_0$; then 
$b(a_0)=\sqrt{2/\e}$ and  (see
Figure 3)
\begin{equation*}
0.2268\leq a_0 \leq 0.2269.
\end{equation*}

\begin{figure}
 \centering
 \includegraphics[width=0.7\textwidth]{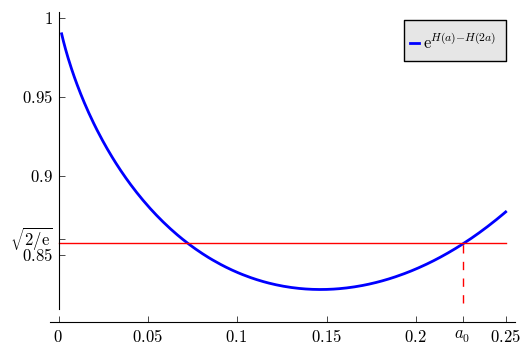}
 \caption{$\e^{H(a)-H(2a)}$}
 \end{figure}

 We fix now $a=a_0$. For a given $\epsilon>0$, because the function
  $ f(r)=\sqrt{2c(r)/\e}$ is continuous, there is a $r>r(a_0)$ such
  that $f(r)=f(r(a_0))+\epsilon$,
  and such that,  for $n$ sufficiently large, 
$r$ is a valid radius for all the graphs $G_n$. Applying
   Theorem \ref{Theorem 2} to this value of $r$ and to $\epsilon$, we
   obtain 
\begin{equation*}
\vartheta_{G_n}(\R^n) \lessapprox (f(r(a_0))+\epsilon)^n
\end{equation*}
with 
\begin{equation*}
f(r(a_0))=\sqrt{2(1+\sqrt{2a_0})\e^{-(1+\sqrt{2a_0})}} <  (1.262)^{-1}.
\end{equation*}

In \cite{Rai}, Raigorodskii considers graphs with vertices in
$\{-1,0,1\}^n$, where the number of $-1$, respectively of $1$, 
grows linearly with $n$. If the number of $1$, respectively of $-1$,
is of the order of $x_1n$, respectively  $x_2n$, with $x_2\leq x_1$, if
$z=(x_1+3x_2)/2$, and $y_1=(-1+\sqrt{-3z^2+6z+1})/3$, he shows that:
\begin{equation}\label{Raigo}
\frac{\alpha(G_n)}{M_n}\lessapprox b(x_1,x_2)^n \text{ where }
b(x_1,x_2)=\e^{-(H_2(x_1,x_2)-H_2(y_1,(z-y_1/2))}
\end{equation}
where $H_2(u,v)=-u\ln(u)-v\ln(v)-(1-u-v)\ln(1-u-v)$. The proof of
\eqref{Raigo} relies on a similar argument as in Frankl-Wilson
intersection theorem.
These graphs can be realized as subgraphs of the unit distance graph
in $\R^n$ with minimal radius 
\begin{equation*}
r(x_1,x_2)=\sqrt{\frac{(x_1+x_2)-(x_1-x_2)^2}{(x_1+3x_2)}}.
\end{equation*}

For $x_1=0.22$ and $x_2=0.20$, the inequality $b(x_1,x_2)<\sqrt{2/\e}$
holds and $f(r(x_1,x_2))<1.268^{-1}$, leading to the announced
inequality \eqref{final}.

\begin{remark}\label{rem-lim} The possibility to further improve the basis of
  exponential growth using Theorem \ref{Theorem 2} is rather
  limited. Indeed, $f(r)\geq \sqrt{2c(1/2)/\e}>(1.316)^{-1}$. 
So, with this method, we cannot reach a better basis that $1.316$.
\end{remark}

\section{Numerical results for dimensions up to
  $24$}\label{section-numericals}

In this range of dimensions, we have tried many graphs in order to
improve the known upper estimates of $m_1(\R^n)$ (and, in turn, the
lower estimates of the measurable chromatic number). We didn't improve upon the results obtained in \cite{OV}
for dimension $2$ and $3$. We report here
the best we could achieve for $4\leq n\leq 24$. Table \ref{Table2} displays a feasible
solution $(z_0,z_1,z_2)$ of \eqref{LP} where the notations are
those of section \ref{section-LP}: $G$ is an induced subgraph of
the unit distance graph in dimension $n$, and it has $M$ vertices at
distance $r$ from $0^n$. The number given in the third column is the
exact value, or an upper bound, of its independence number
$\alpha(G)$, and replaces $\alpha(G)$ in \eqref{LP}. The last
column contains the objective value of \eqref{LP}, thus an upper
bound for $m_1(\R^n)$. Table \ref{Table3} gives the corresponding
lower bounds for $\chi_m(\R^n)$, compared to the previous best known
ones.

The computation of $(z_0,z_1,z_2)$ was
performed in a similar way as in \cite{OV}: the interval $[0,50]$ is
sampled in order  to replace the condition $ z_0+z_1\Omega_n(t)+z_2
\Omega_n(rt) \geq 0$ for all $t>0$ by a finite number of inequalities;
the resulting linear program is solved leading to a solution
$(z_0^*,z_1^*,z_2^*)$. The function $ z_0^*+z_1^*\Omega_n(t)+z_2^*
\Omega_n(rt)$ is then almost feasible for \eqref{LP}, in the sense
that its absolute minimum is reached in the range $[0,50]$ and is a
(small) negative number. Then, we need only
slightly increase $z_0^*$  in order to turn it to  a true feasible
solution.  The computations were performed with the help of the
softwares SAGE \cite{Sage} and lpsolve \cite{BEN}. 

In the next two subsections, we give more details on the graphs involved in the computations and
on how we dealt with their independence number.

\subsection{Johnson graphs}

The generalized Johnson graphs were introduced  in section \ref{section-theorem 1}.
They give the best upper bound for dimensions between $12$ and $23$.

According to the definition \ref{def Johnson},  the coordinates of any vertex of $J(n,w,i)$
sum to $w$, and the squared Euclidean distance
between two vertices connected by an edge is equal to $2(w-i)$, so,
after rescaling by $1/\sqrt{2(w-i)}$,
$J(n,w,i)$ is an induced subgraph of the unit distance graph of dimension
$n-1$. A straightforward calculation shows that it lies on a sphere
of radius equal to $\sqrt{w(1-w/n)/(2(w-i))}$.

For these graphs, several strategies are available to deal with their independence number, that we will discuss now.

A direct computation of $\alpha(J(n,w,i))$ for all $w,i$ turns successful  only up to  $n=10$ (we
have performed the computations using the package GRAPE of the
computational system  GAP, that deals with graphs with symmetries
\cite{Soi}; indeed, the graphs $J(n,w,i)$ are invariant under the
group of permutations of the $n$ coordinates). 
On the other hand, for the graphs $J(n,3,1)$, there is an
explicit formula due to Erd\H{o}s and S\'os (see \cite[Lemma
18]{LR}), but the number of vertices in this case, which is roughly equal to $n^3$,
it too small to lead to a good bound. 

If, being less demanding, we seek only for an upper estimate of
$\alpha(J(n,w,i))$, we have two possibilities. One of them is offered 
by Frankl and Wilson bound recalled in Theorem \ref{Theorem FW}, if the parameters $(n,w,i)$ 
are of the specific form $(n,2q-1,q-1)$ where $q$ is the power of a
prime number.

Another upper bound of  $\alpha(J(n,w,i))$ is given by the
Lov\'asz theta number $\vartheta(J(n,w,i))$ of the graph $J(n,w,i)$. The theta number of a graph was introduced by Lov\'asz in \cite{Lov}. 
It is a semidefinite programming relaxation of the independence number
(its definition and properties will be recalled in  section
\ref{section-theta}).
The group
of permutations of the $n$ coordinates acts transitively on the 
vertices as well as on the edges of the graph $J(n,w,i)$ so from \cite[Theorem 9]{Lov}, its
theta number expresses in terms of the largest and smallest
eigenvalues of the graph; taking into account that these eigenvalues,
being the eigenvalues of the Johnson scheme, are computed in
\cite{Del2} in terms of Hahn polynomials, we have, if $Z_k(i):=Q_k(w-i)/Q_k(0)$  with the notations
of \cite{Del2}:
\begin{equation}\label{theta-J}
\frac{\vartheta(J(n,w,i))}{|J(n,w,i)|} =\frac{-\min_{k\in [w]}
  Z_k(i)}{1-\min_{k\in [w]} Z_k(i)}.
\end{equation}

\begin{remark}
We note that this expression is completely analogous to \eqref{theta};  indeed,
both graphs afford an automorphism group that is edge transitive. We refer to
\cite{BDOV} for an interpretation of \eqref{theta} in terms of eigenvalues
of operators. 
\end{remark}

The bound on $\alpha(J(n,w,i))$ given by \eqref{theta-J}
unfortunately turns to be poor.  Computing $\vartheta'(J(n,w,i))$ instead of
$\vartheta(J(n,w,i))$ ($\vartheta'$ is a standard strengthening of $\vartheta$ obtained by adding a non negativity constraint on the matrix variables, see  Remark \ref{rem}) represents an easy way to
tighten it. Indeed, one can see that
$\vartheta(J(n,w,i))=\vartheta'(J(n,w,i))$ only if
\begin{equation}\label{cond}
Z_{k_0}(i)=\min_{j\in [w]} Z_{k_0}(j)
\end{equation}
where $k_0$ satisfies $Z_{k_0}(i)=\min_{k\in [w]}
Z_k(i)$. It turns out that \eqref{cond} is not always fulfilled
 and in these cases $\vartheta'(J(n,w,i))<\vartheta(J(n,w,i))$.

A more serious improvement is
provided by semidefinite programming following \cite[(67)]{Sch2} where
constant weight codes with given minimal distance are considered. 
In order to apply this framework to our setting, we only need to
change the range of avoided Hamming distances in \cite[(65-iv)]{Sch2}.

Table \ref{Table1} displays the numerical values of the three  bounds for certain
parameters $(n,w,i)$, selected either because they allow for Frankl and Wilson bound, or because they 
give the best upper bound for $m_1(\R^n)$ that we could achieve.  For most of these parameters, the semidefinite programming  bound turns to be the best
one and is significantly better than  the theta number. It would be of course very interesting to
understand the asymptotic behavior of this bound when $n$ grows to
$+\infty$, unfortunately this problem seems to be out of reach to
date.

The computation of the semidefinite programming bound  was performed
on the NEOS website (\url{http://www.neos-server.org/neos/}) with  the
solver SDPA \cite{SDPA} and double checked with SDPT3  version
4.0-beta \cite{TTT}.

\subsection{Other graphs}

The $600$-cell is a  regular polytope of dimension $4$ with
$120$ vertices: the sixteen
points $(\pm 1/2,\pm1/2, \pm 1/2,\pm 1/2)$, the eight permutations of $(\pm 1,0,0,0)$ and the 
$96$ points that are  even permutations of $(0,\pm 1/(2\phi), \pm 1/2,\pm \phi/2)$,  where $\phi=(1+\sqrt{5})/2$.
If $d$ is the distance between two non antipodal vertices, we have 
$d^2\in \{(5\pm\sqrt{5})/2, 3, (3\pm \sqrt{5})/2,  2,1\}$.
Each value of $d$ gives raise to a graph connecting the vertices that are at distance $d$ apart; these graphs,
after rescaling so that the edges have length $1$, lie on the sphere of radius $r=1/d$.
Their independence numbers are respectively equal to: $39, 26, 24, 26, 20$. We note that applying the conjugation $\sqrt{5}\to -\sqrt{5}$ will obviously not change the independence number.
Among these graphs, the best result in dimension $4$, recorded in Table \ref{Table2}, was obtained with $d=\sqrt{3}$.
 It turned out that the same graph gave the best result we could achieve in dimensions $5$ and $6$.

The root system $E_8$ is the following set of $240$ points in $\R^8$:
the points $((\pm 1)^2, 0^6)$ and all their permutations, and 
the points $((\pm 1/2)^8)$ with an even number of minus signs.
The distances between two non antipodal points take three different values: $d=\sqrt{2}, 2, \sqrt{6}$. 
The unit distance subgraph associated to a value of $d$ lies on a sphere of radius $r=\sqrt{2}/d$ and has an independence number 
equal respectively to  $16,16,36$. The one with the smallest radius $r=\sqrt{1/3}$ gives the best bound in dimension $8$
as well as in dimensions $9$, $10$, and $11$ (in these dimensions we have compared with Johnson graphs). 

The configuration in dimension $7$ is derived from $E_8$: given $p\in E_8$,  we take the set of points in $E_8$ closest to $p$. Independently of $p$, this construction leads to $56$ points that lie on a hyperplane. The graph  defined by the distance $\sqrt{6}$ after suitable rescaling corresponds to $r=\sqrt{6}/4$ and has independence number $7$.

In dimension $24$, we obtained the best result with the so-called orthogonality graph $\Omega(24)$. For $n=0\bmod 4$, $\Omega(n)$ denotes 
the graph with vertices in $\{0,1\}^{n}$, where the edges connect the points at Hamming distance $n/2$. Using semidefinite programming, an upper bound of its independence number is computed for $n=16,20,24$ in \cite{KP}.

\begin{table}
\centering
\bigskip
\bigskip

\begin{tabular}{|r|r|r|r|r|}
$(n,w,i)$ & $\alpha(J(n,w,i))$ & FW bound \cite{FW} & $\vartheta'(J(n,w,i))$  & SDP bound \\
\hline

$(6,3,1)$ &4&6&4&4\\
$(7,3,1)$ &5&7&5&5\\
$(8,3,1)$ &8&8&8&8\\
$(9,3,1)$ &8&9&11&8\\
$(10,5,2)$ &27&45&30&27\\
$(11,5,2)$ &37&55&42&37\\
$(12,5,2)$ &57&66&72&57\\
$(12,6,2)$ &&&130&112\\
$(13,5,2)$ &&78&109&72\\
$(13,6,2)$ &&&191&148\\
$(14,7,3)$ &&364&290&184\\
$(15,7,3)$ &&455&429&261\\
$(16,7,3)$ &&560&762&464\\
$(16,8,3)$ &&&1315&850\\
$(17,7,3)$ &&680&1215&570\\
$(17,8,3)$ &&&2002&1090\\
$(18,9,4)$ &&3060&3146&1460\\
$(19,9,4)$ &&3876&4862&2127\\
$(20,9,3)$ &&&13765&6708\\
$(20,9,4)$ &&4845&8840&3625\\
$(21,9,4)$ &&5985&14578&4875\\
$(21,10,4)$ &&&22794&8639\\
$(22,9,4)$ &&7315&22333&6480\\
$(22,11,5)$ &&&36791&11360\\
$(23,9,4)$ &&8855&32112&8465\\
$(23,11,5)$ &&&58786&17055\\
$(24,9,4)$ &&10626&38561&10796\\
$(24,12,5)$ &&&172159&53945\\
$(25,9,4)$ &&12650&46099&13720\\
$(26,13,6)$ &&230230&453169&101494\\
$(27,13,6)$ &&296010&742900&163216\\
\hline
\end{tabular}
\medskip
\caption{Bounds for the independence number of $J(n,w,i)$}
\label{Table1}
\end{table}

\begin{table}
\centering
\bigskip
\bigskip

\rotatebox{90}{
\begin{tabular}{|r|r|r|r|r|r|r|r|r|}
n& $G$ & $\alpha$ & $M$ & $r$ & $z_0$ & $z_1$ & $z_2$ &
$z_0+z_2\alpha/M$ \\
\hline
4& $600$-cell & 26 & 120 & $\sqrt{3}/3$ & 0.0421343 & 0.690511 &0.267355 & 0.100062\\
5& $600$-cell & 26 & 120 & $\sqrt{3}/3$ & 0.023477 & 0.772059 & 0.204465 & 0.0677778 \\
6& $600$-cell & 26 & 120 & $\sqrt{3}/3$ & 0.0141514 &0.830343 & 0.155506 & 0.0478444\\
7& $E_8$ kissing & 7 & 56 & $\sqrt{6}/4$ &  0.007948 & 0.834435 &0.157617  & 0.0276502\\
8 & $E_8$  &  36 & 240 & $\sqrt{3}/3$ &0.0053364&0.899613 & 0.0950508& 0.0195941\\
9 & $E_8$  &  36 & 240 & $\sqrt{3}/3$&0.0033303&0.921154&0.0755157&0.0146577\\
10 & $E_8$  &  36 & 240 & $\sqrt{3}/3$ &0.00209416&0.937453&0.0604529& 0.0111621 \\
11 & $E_8$  &  36 & 240 & $\sqrt{3}/3$ & 0.00132364&0.949973&0.0487036&0.00862918 \\
12 & $J(13,6,2)$ &148 &1716&$\sqrt{21/52}$&9.002e-04&0.938681& 0.0604188&0.00611112\\
13 & $J(14,7,3)$ & 184&3432&$\sqrt{7/16}$&5.933e-04& 0.936921&0.0624857&0.00394335\\
14& $J(15,7,3)$ &261 &6435&$\sqrt{7/15}$&3.9393e-04&0.935283&0.0643239&0.00300288\\
15 & $J(16,8,3)$ &850 &12870&$\sqrt{2/5}$&2.7212e-04&0.967168&0.0325604&0.00242258\\
16 & $J(17,8,3)$ & 1090&24310&$\sqrt{36/85}$&1.9080e-04&0.968014&0.0317961&0.00161646\\
17 & $J(18,9,4)$ & 1460&48620&$\sqrt{9/20}$&1.34658e-04&0.967557&0.0323093&0.00110487\\
18 & $J(19,9,4)$ & 2127&92378&$\sqrt{9/19}$&9.50746e-05&0.96714&0.032765&8.49488e-04\\
19 & $J(20,9,3)$ & 6708&167960&$\sqrt{33/80}$&5.944e-05&0.98275&0.0171908&7.46008e-04\\
20 & $J(21,10,4)$ &8639 &352716&$\sqrt{55/126}$&4.44363e-05&0.982618&0.0173381&4.69095e-04\\
21& $J(22,11,5)$ & 11360&705432&$\sqrt{11/24}$&3.2936e-05&0.982495&0.0174727&3.1431e-04\\
22& $J(23,11,5)$ & 17055&1352078&$\sqrt{11/23}$&2.4315e-05&0.982385&0.0175913&2.46211e-04\\
23& $J(24,12,5)$ & 53945&2704156&$\sqrt{3/7}$&1.40898e-05&0.990052&0.00993429&2.12269e-04\\
24& $\Omega(n)$ & 183373&$2^{24}$&$\sqrt{1/2}$&1.30001e-05&0.984309&0.0156786&1.84366e-04\\
\hline
\end{tabular}
\medskip
}
\caption{Feasible solutions of \eqref{LP} and corresponding upper bounds
  for $m_1(\R^n)$}
\label{Table2}
\end{table}

\begin{table}
\centering
\begin{tabular}{|r|r|r|}
n & previous best  & new \\
 & lower bound  & lower bound \\
 & for $\chi_m(\R^n)$ & for $\chi_m(\R^n)$ \\
\hline
4 & 9 \cite{OV}& 10\\
5& 14\cite{OV}&15\\
6&20 \cite{OV}&21\\
7&28 \cite{OV}&37\\
8& 39\cite{OV}&52\\
9& 54\cite{OV}&69\\
10&73 \cite{OV}& 90\\
11&97 \cite{OV}&116\\
12 &129 \cite{OV}& 164\\
13 & 168\cite{OV}& 254\\
14 & 217\cite{OV}& 334\\
15 & 279\cite{OV}& 413\\
16 &355 \cite{OV}& 619\\
17 & 448\cite{OV}& 906\\
18 & 563\cite{OV}& 1178\\
19 &705 \cite{OV}& 1341\\
20 & 879\cite{OV}& 2132\\
21& 1093\cite{OV}& 3182\\
22& 1359\cite{OV}& 4062\\
23&1690 \cite{OV}& 4712\\
24& 2106\cite{OV}& 5424\\
\hline
\end{tabular}
\medskip
\caption{Lower bounds for the measurable chromatic number}
\label{Table3}
\end{table}

\section{Tightening the theta number of compact graphs with
  subgraphs}\label{section-theta}

In this section, we would like  to show that the method presented in section \ref{section-LP} to upper bound
$m_1(\R^n)$ is flexible enough to be adapted to a broad variety of
situations, in order to design tight upper bounds for the
independence number of a graph. In fact, this method represents an
interesting way to strengthen the upper bound given by Lov\'asz  theta number of a graph, by exploiting an additional
constraint arising from a subgraph. 

The framework in which we will develop the method is that of a graph
$\G=(X,E)$ where $X$  is a compact topological  space, endowed with the
continuous and transitive action of a compact topological group $\Gamma$ ($X$ is
called a homogeneous space). This framework includes the case of
finite graphs where the vertex set is given the discrete topology.
 There are two reasons  why we do not limit ourselves
to the finite case: one is that going from finite graphs to compact
graphs does not raise essential difficulties; 
the other reason is that the compact case includes  spaces of special interest to us, 
in particular that  of the unit sphere $S^{n-1}$ (see Remark \ref{rem-sphere}). 

Before we dive into this rather general framework, we review the
 theta number of a finite graph.

\subsection{The theta number of a finite graph}
This number, denoted 
$\vartheta(\G)$ and  introduced in \cite{Lov},
is the optimal value of a semidefinite program that satisfies
\begin{equation*}
\alpha(\G)\leq \vartheta(\G)\leq \chi(\overline{\G})
\end{equation*}
where $\alpha(\G)$ denotes as before the independence number of $\G$,
$\overline{\G}$ is the complementary graph, and $\chi(\overline{\G})$
is its chromatic number, the least number of colors needed to color
all vertices so that adjacent vertices receive different colors.

One of the many equivalent definitions of $\vartheta(\G)$  involves matrices  $S$ 
whose rows and columns are indexed by $X$; for such a matrix, whose coefficients will be denoted
$S(x,y)$, we write $S\succeq 0$ if $S$ is symmetric and  positive
semidefinite. Then, following \cite[Theorem 4]{Lov}:
\begin{equation}\label{theta-general}
\begin{array}{cl}
 \vartheta(\G)=\sup\big\{ \sum_{(x,y)\in X^2} S(x,y)\ :\
& S\in \R^{X\times X},\ S\succeq 0, \\
& \sum_{x\in X} S(x,x)=1,\\
& S(x,y)=0\ (\{x,y\}\in E)\ \big\}.
\end{array}
\end{equation}
The inequality:
\begin{equation*}
\alpha(\G)\leq \vartheta(\G)
\end{equation*}
follows from the properties  of a certain 
matrix naturally associated to a subset  set $A\subset X$:
\begin{equation*}
S_A(x,y):=\1_A(x)\1_A(y)/|A|.
\end{equation*}
This matrix  satisfies a number of linear conditions:
\begin{equation*}
\sum_{x\in X} S_A(x,x)=1,\quad \sum_{(x,y)\in X^2} S_A(x,y)=|A|,
\end{equation*}and, if $A$ is an independent set, $S_A(x,y)=0$ for all $\{x,y\}\in E
$. Moreover, $S_A$ is positive semidefinite, so it defines a feasible solution of the semidefinite program in \eqref{theta-general}, with objective value equal to $|A|$.
The inequality $\alpha(\G)\leq \vartheta(\G)$ follows immediately. 

\begin{remark}\label{rem} In order to tighten the inequality $\alpha(G)\leq
 \vartheta(G)$ for finite graphs, it is customary to add the condition $S\geq 0$ (meaning all
  coefficients of $S$ are non negative) to the constraints in \eqref{theta-general}; the new
  optimal value is denoted $\vartheta'(G)$ and coincides with 
   the linear programming bound introduced earlier by P. Delsarte for the cardinality of codes in polynomial association
   schemes (see \cite{Del1} and \cite{Sch1}). 
\end{remark}

\subsection{Compact homogeneous graphs}
From now on, we assume that $X$ is a compact space, acted upon by a compact
topological group $\Gamma$ which is a
subgroup of the automorphism group of the graph $\G$. We assume that the application $(\gamma, x)\mapsto \gamma x$
defining the action of $\Gamma$ on $X$  is continuous, and that this action is transitive. 
We choose a base point $p\in X$, and let
$\Gamma_p$ denote the stabilizer of $p$ in $\Gamma$, so that $X$ can
be identified with the quotient space $\Gamma/\Gamma_p$ (see \cite[section 2.6]{Fol}). 

The group $\Gamma$ is equipped with its Haar  measure (see \cite[section 2.2]{Fol}), normalized so that its total 
volume equals $1$, which  induces a Borel regular measure on $X$, such
that for any measurable function $\varphi$ on $X$,
\begin{equation*}
\int_X \varphi(x) dx=\int_{\Gamma} \varphi(\gamma
p)d\gamma.
\end{equation*}
(see \cite[section 2.6]{Fol}). Volumes for this measure will be denoted $\vol_X$. The {\em independence
volume}  $\alpha_X(\G)$ of $\G$ is by definition the supremum of the volume of a measurable independent
set of $X$:
\begin{equation*}
\alpha_X(\G):=\sup  \{ \vol_X(A)\ :\ A\subset V, A\text{ is Borel measurable and independent }\}.
\end{equation*}
We will assume from now on that $\alpha_X(\G)>0$. 
We note that, if $X$ is finite,  the measure induced on
$X$ is simply
\begin{equation*}
\int_X \varphi(x) dx=\frac{1}{|X|}\sum_{x\in X} \varphi(x).
\end{equation*}
In particular, if $A\subset X$, $\vol_X(A)=|A|/|X|$ so
$\alpha_X(\G)=\alpha(\G)/|X|$. 

Now let $V\subset X$ be a Borel measurable subset of $X$ together with a finite positive Borel
measure $\lambda$ on $V$, such that $0<\lambda(V)<+\infty$. 
We introduce as in section \ref{section-LP} the $\lambda$-independence 
number of the  subgraph $G$ induced on $V$ by $\G$:
\begin{equation*}
\alpha_{\lambda}(G):=\sup\{\lambda(A)\ :\ A\subset V, A\text{ a Borel measurable independent set }\}.
\end{equation*}

In order to define  $\vartheta_G(\G)$, we need to introduce 
positive definite functions on $X$. If  $f$ belongs to  the space $\CC(X)$ of real valued continuous 
functions on $X$, we say that $f$ is {\em positive definite} and denote $f\succeq 0$ if, for all $k\geq 1$, 
for any choice of $\gamma_1,\dots,\gamma_k\in \Gamma$, the matrix with coefficients 
$f(\gamma_j^{-1}\gamma_i p)$ is symmetric positive semidefinite. Because 
$\Gamma$ is compact and $f$ is continuous, this condition is equivalent to the property that 
the function $\gamma\to f(\gamma p)$ is a  function of positive type  on $\Gamma$ in the sense of
\cite[section 3.3]{Fol} (see also \cite{DLV}). 

We note that this notion coincides with the notion of positive definite functions that came into play in 
section \ref{section-LP}. Indeed, the situation of section \ref{section-LP} corresponds to the case of the additive group
$\Gamma=\R^n/L$, acting on itself by translations, with $p=0^n$.

Now we can state the main result of this section:

\begin{theorem} \label{Theorem 5}With the notations introduced above, let 
\begin{equation}\label{theta-G}
\begin{array}{cl}
\vartheta_G(\G)=\sup\big\{ \int_X f(x) dx\ :\
& f\in \CC(X),\  f(\gamma x)=f(x) \ (\gamma\in \Gamma_p), \\
& f\succeq 0, \\
& f(p)=1,\\
& f(x)=0\ (\{x,p\}\in E)\\
& \int_V f(v)d\lambda(v) \leq \alpha_{\lambda}(G)\ \big\}.
\end{array}
\end{equation}
Assuming $\alpha_X(\G)>0$, we have
\begin{equation*}
\alpha_X(\G)\leq \vartheta_G(\G).
\end{equation*}
\end{theorem}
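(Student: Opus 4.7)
The plan is to adapt the construction $S_A(x,y) = \1_A(x)\1_A(y)/|A|$ from the finite graph setting to the compact homogeneous setting by producing, for each Borel measurable independent set $A \subset X$ with $\vol_X(A) > 0$, an explicit feasible solution of \eqref{theta-G} whose objective value equals $\vol_X(A)$. Setting $g(\gamma) := \1_A(\gamma p)$, I would define
\begin{equation*}
f_A(x) := \frac{1}{\vol_X(A)} \int_\Gamma \1_A(\gamma p)\, \1_A(\gamma x)\, d\gamma,
\end{equation*}
so that its lift $\tilde f_A(\gamma) := f_A(\gamma p)$ is, up to normalization, the auto-correlation $\int_\Gamma g(\gamma') g(\gamma' \gamma)\, d\gamma'$ on $\Gamma$. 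The continuity of $f_A$ follows from the classical fact that on a compact group the right translation is $L^2$-continuous, so this convolution-type integral is continuous in $\gamma$. Right $\Gamma_p$-invariance of $\tilde f_A$ uses $\1_A(\gamma' \gamma h p) = \1_A(\gamma' \gamma p)$ for $h \in \Gamma_p$, and left $\Gamma_p$-invariance follows from the invariance of the Haar measure together with $\1_A(\gamma'' h^{-1} p) = \1_A(\gamma'' p)$. Hence $f_A$ descends to a continuous function on $X$ satisfying $f_A(\gamma x) = f_A(x)$ for all $\gamma \in \Gamma_p$. Positive definiteness is the standard identity, obtained via the Haar change of variables $\gamma_0 = \gamma' \gamma_j^{-1}$:
\begin{equation*}
\sum_{i,j} c_i c_j\, \tilde f_A(\gamma_j^{-1}\gamma_i) = \frac{1}{\vol_X(A)} \int_\Gamma \Big|\sum_i c_i\, g(\gamma_0 \gamma_i)\Big|^2 d\gamma_0 \geq 0.
\end{equation*}

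Next I would verify the three remaining constraints of \eqref{theta-G}. The normalization $f_A(p) = 1$ is immediate from $\1_A^2 = \1_A$ and $\int_\Gamma \1_A(\gamma p)\, d\gamma = \vol_X(A)$. If $\{x,p\} \in E$, then because $\Gamma$ acts by automorphisms of $\G$, the edge property transfers to $\{\gamma p, \gamma x\} \in E$ for every $\gamma \in \Gamma$; since $A$ is independent, at most one of $\gamma p, \gamma x$ lies in $A$, forcing $\1_A(\gamma p)\1_A(\gamma x) = 0$ pointwise and hence $f_A(x) = 0$. For the $V$-constraint, Fubini yields
\begin{equation*}
\int_V f_A(v)\, d\lambda(v) = \frac{1}{\vol_X(A)} \int_\Gamma \1_A(\gamma p)\, \lambda(\gamma^{-1} A \cap V)\, d\gamma,
\end{equation*}
and since $\gamma^{-1} A$ is again an independent set of $\G$, its intersection with $V$ is an independent set of the induced subgraph $G$, so $\lambda(\gamma^{-1} A \cap V) \leq \alpha_\lambda(G)$; combining with $\int_\Gamma \1_A(\gamma p)\, d\gamma = \vol_X(A)$ gives $\int_V f_A\, d\lambda \leq \alpha_\lambda(G)$.

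Finally, a further Fubini swap combined with the $\Gamma$-invariance of the measure on $X$ computes
\begin{equation*}
\int_X f_A(x)\, dx = \frac{1}{\vol_X(A)} \int_\Gamma \1_A(\gamma p) \Big(\int_X \1_A(\gamma x)\, dx\Big) d\gamma = \vol_X(A),
\end{equation*}
so $f_A$ is feasible for the program defining $\vartheta_G(\G)$ with objective value exactly $\vol_X(A)$. Taking a sequence of Borel measurable independent sets $A_n$ with $\vol_X(A_n) \to \alpha_X(\G)$ then yields $\alpha_X(\G) \leq \vartheta_G(\G)$. The main technical point is the regularity of $f_A$ (continuity and positive definiteness of the convolution-type integral of an indicator function on a compact group); the rest is bookkeeping via Fubini and the observation that $\Gamma$ acts by graph automorphisms, which transports independent sets to independent sets and thereby couples the global bound $\alpha_X(\G)$ to the local bound $\alpha_\lambda(G)$.
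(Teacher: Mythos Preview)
Your proof is correct and follows essentially the same route as the paper: you construct the same auto-correlation function $f_A(x)=\frac{1}{\vol_X(A)}\int_\Gamma \1_A(\gamma p)\1_A(\gamma x)\,d\gamma$, verify continuity, $\Gamma_p$-invariance, positive definiteness, the normalization $f_A(p)=1$, vanishing on edges, and the $V$-constraint via $\lambda(\gamma^{-1}A\cap V)\le \alpha_\lambda(G)$, and then compute the objective value to be $\vol_X(A)$. The paper's proof is organized identically, with the only cosmetic difference being that it cites Folland for continuity of convolutions of bounded functions rather than invoking $L^2$-continuity of translations.
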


\begin{proof} 
Let $A\subset X$ be a Borel measurable independent set of positive measure. We introduce 
\begin{equation*}
f_A(x):= \frac{1}{\vol_X(A)}\int_{\Gamma} \1_A(\gamma x)\1_A(\gamma p)d\gamma.
\end{equation*}
This function $f_A\in \R^X$ will play the role of the matrix $S_A$ that
occurred  in the proof
of the inequality $\alpha(\G)\leq \vartheta(\G)$ for finite graphs. We
claim that $f_A$ satisfies the constraints required by the program
defining $\vartheta_G(\G)$.  Indeed, being the convolution over
$\Gamma$ of two
bounded functions, $f_A$ is continuous (see \cite[(2.39)]{Fol}). The
other conditions (numbered (i) to (v) in order of appearance in \eqref{theta-G}) are easily obtained, applying Fubini's theorem to
swap integrals and the invariance by left and right multiplication of
the Haar measure on $\Gamma$. We skip the details for (i) and
(iii). Condition (iv) holds because on one hand, if $\{x,p\}\in E$, also $\{\gamma x,\gamma p\}\in E$, and on the other hand, 
 $A$ is an independent set of $\G$,  so $\1_A(\gamma x)\1_A(\gamma p)=0$.

Let us check (ii), i.e. $f_A\succeq 0$: thanks to the right invariance of the Haar measure, we have
\begin{align*}
 f_A(\gamma_j^{-1}\gamma_i p)&= \frac{1}{\vol_X(A)}\int_{\Gamma}\1_A(\gamma\gamma_j^{-1}\gamma_i p)\1_A(\gamma p)d\gamma\\
&=\frac{1}{\vol_X(A)}\int_{\Gamma}\1_A(\gamma\gamma_i p)\1_A(\gamma\gamma_j p)d\gamma.
\end{align*}
So, the matrix with coefficients $f_A(\gamma_j^{-1}\gamma_i p)$ is
symmetric. Moreover, for $(x_1,\dots,x_k)\in \R^k$, 
\begin{equation*}
\sum_{1\leq i,j\leq k} x_i x_j f_A(\gamma_j^{-1}\gamma_i p)=
\frac{1}{\vol_X(A)}\int_{\Gamma}\Big(\sum_{i=1}^k x_i \1_A(\gamma\gamma_i p)\Big)^2d\gamma \geq 0.
\end{equation*}

In order to verify (v), we remark that
\begin{equation}\label{ineq1}
\int_{V} \1_A(\gamma v)d\lambda(v)=\lambda((\gamma^{-1}A)\cap V)\leq \alpha_{\lambda}(G).
\end{equation}
This inequality, combined with Fubini's theorem, leads to the
result. Indeed,
\begin{align*}
\int_V f_A(v)d\lambda(v) &= \frac{1}{\vol_X(A)} \int_V \int_{\Gamma}  \1_A(\gamma v)\1_A(\gamma p) d\gamma d\lambda(v)\\
&= \frac{1}{\vol_X(A)} \int_{\Gamma} \Big(\int_V  \1_A(\gamma v)
d\lambda(v)\Big)\1_A(\gamma p) d\gamma \\
&\leq  \frac{1}{\vol_X(A)} \int_{\Gamma}
\alpha_{\lambda}(G)\1_A(\gamma p) d\gamma =\alpha_{\lambda}(G).
\end{align*}

It remains to compute the objective value or $f_A(x)$; for this, we
apply Fubini's theorem once more:
\begin{align*}
\int_X f_A(x) dx &=\int_X \frac{1}{\vol_X(A)} \int_{\Gamma}
\1_A(\gamma x)\1_A(\gamma p) d\gamma dx\\
&=\frac{1}{\vol_X(A)} \int_{\Gamma} \Big(\int_X \1_A(\gamma x)
dx\Big)\1_A(\gamma p) d\gamma \\
&=\frac{1}{\vol_X(A)} \int_{\Gamma} \vol_X(\gamma^{-1} A) \1_A(\gamma
p) d\gamma \\
&=\int_{\Gamma} \1_A(\gamma p) d\gamma =\vol_X(A).
\end{align*}

\end{proof}

\begin{remark}\label{rem-sphere} Taking $X=S^{n-1}$, and $E=\{\{x,y\}\ :\ \Vert
x-y\Vert =d\}$, defines a graph $G(S^{n-1},d)$ homogeneous under the action of the
orthogonal group that fits into our setting. Moreover, up to a
suitable rescaling, this graph is an induced subgraph of  the unit
distance graph. So,  Theorem \ref{Theorem 5} can be  applied to compute tight bounds 
for $\alpha(G(S^{n-1},d))$, which in turn may be used in
Theorem \ref{theorem-general}, suggesting an inductive
method to calculate better upper bounds for $m_1(\R^n)$. 

\end{remark}

In the remaining of this section, we discuss some connections between Theorem \ref{Theorem 5} and 
previous results.

\subsection{The relationship between $\vartheta_G(\G)$ and $\vartheta(\G)$ for finite homogeneous graphs.}

If $\G$ is a finite homogeneous graph for the group $\Gamma$, its theta number can be rewritten as:
\begin{equation*}
\begin{array}{cl}
\vartheta(\G)=\sup\big\{ \sum_{x\in X} f(x) \ :\
& f\in \R^X,\ f(\gamma x)=f(x) \ (\gamma\in \Gamma_p), \\
& f\succeq 0, \\
& f(p)=1,\\
& f(x)=0\ (\{x,p\}\in E)\ \big\}.\\
\end{array}
\end{equation*}
We refer to \cite[Theorem 2]{DLV} where this reformulation is given in
the  special case of  Cayley graphs. 
The generalization to homogeneous graphs is straightforward. 
So, $\vartheta_G(\G)$ is a tightening of $\vartheta(\G)$ with an additional constraint relative to the subgraph $G$, and we have
\begin{equation*}
\vartheta_G(\G)\leq \vartheta(\G).
\end{equation*}

\subsection{The analogy between $\vartheta_G(\G)$ and $\vartheta_G(\R^n)$.}

Our notations suggest an analogy between $\vartheta_G(\G)$ as defined in \eqref{theta-G}
and $\vartheta_G(\R^n)$ as introduced in Theorem \ref{theorem-general}. This analogy will be more transparent
from the expression of the dual program of $\vartheta_G(\G)$. Here we apply the duality theory of conic linear programs 
in locally convex topological vector spaces for which we refer to
\cite[Chapter IV]{Bar}. The dual space of the space $\CC(X)$ of real valued continuous
functions on $X$, i.e. the space of continuous linear forms on
$\CC(X)$ equipped with the topology defined by the supremum norm,   is the space $\M(X)$ of signed regular measures on $X$ 
(it follows from Riesz representation theorem, see 
\cite[Theorem 6.19]{Rud}). For $\mu\in \M(X)$, the notation $\mu\succeq 0$ ($\mu$ positive definite) stands for $\int fd\mu\geq 0$ 
for all $f\in \CC(X)$, $f$ being positive definite. The support of
$\mu$ is denoted by $\Supp(\mu)$. The dual program of $\vartheta_G(\G)$ in the sense of \cite[Chapter 4, section 6]{Bar} becomes:
\begin{equation*}
\begin{array}{cl}
\inf\big\{ z_0+z_2\frac{\alpha_{\lambda}(G)}{\lambda(V)}\ :\ 
& \mu\in \M(X), \ \Supp(\mu)\subset\{x\in X\ :\ \{x,p\}\in E\}\\
& z_2\geq 0\\
& z_0\delta_p + \mu +z_2\frac{\lambda}{\lambda(V)} - dx\succeq 0 \ \big\}
\end{array}
\end{equation*}
We recall that weak duality holds, i.e. that $\vartheta_G(\G)$ is upper bounded by the optimal value of its dual program
(see \cite[Theorem 6.2]{Bar}).

\subsection{An inequality relating $\alpha_X(\G)$ and $\alpha_{\lambda}(G)$ and its connection to $\vartheta_G(\G)$}
Let us go back to the inequality \eqref{ineq1}. If we
  integrate it over $\Gamma$, and then take the supremum over the independent
  sets of $\G$, we obtain 
\begin{equation*}
\alpha_X(\G)\leq \frac{\alpha_{\lambda}(G)}{\lambda(V)}.
\end{equation*}
In particular, if $\G$ is a finite graph and $\lambda$ is the counting
measure, the above inequality becomes
\begin{equation}\label{ineq-ratios}
\frac{\alpha(\G)}{|X|}\leq \frac{\alpha(G)}{|V|}.
\end{equation}
We recover  a standard inequality that has proved to be useful in several instances, in particular if one
has a special hint on $G$. For example, in coding theory it is applied to relate the 
sizes of codes in Hamming and Johnson spaces respectively, following  Elias and
Bassalygo principle (see e.g. \cite{MRRW}). Also, Larman and Rogers inequality
\eqref{Larman-Rogers} can be seen as an analogue of \eqref{ineq-ratios} for the
unit distance graph.

\section{Open problems}

We present here a few questions that we believe would be worth to look at. Some of them have already been 
mentioned previously.

\begin{enumerate}
\item There are several possible variants in the way we apply Theorem \ref{theorem-general} to find upper bounds 
for $m_1(\R^n)$. There is no reason to restrict to graphs that embed in a sphere centered at $0^n$ as we do, 
and also several subgraphs could be used simultaneously. Can the bounds of Tables 2 and 3 be improved this way ?

\item The subgraph method can be applied to strengthen the theta number of finite graphs, in particular it could be used
to obtain better bounds for the independence number of the graphs $J(n,w,i)$. In turn, the resulting upper bounds 
may lead to further improvements on the bounds for $m_1(\R^n)$.  More generally, can Theorem \ref{Theorem 5} applied to the unit sphere  lead to
improved bounds for $m_1(\R^n)$ ( see Remark \ref{rem-sphere})?

\item In coding theory, the so-called MRRW-bound \cite{MRRW} is an asymptotic upper bound for the size of codes 
of given minimal Hamming distance, which derives from Delsarte linear programming bound  
(Lov\'asz theta number provides essentially the same bound). 
For binary codes and for a certain range of minimal distances, it is superseded by the so-called second MRRW-bound, 
which is obtained from the inequality \eqref{ineq-ratios}, where $X$ is the Hamming space and $V$ is a Johnson space with suitable weight
(i.e. the set of binary words of fixed weight). Again, Delsarte linear programming bound is applied to $V$. 

Following Theorem \ref{Theorem 5}, it is possible to design a program that combines the two bounds in one. Is it possible
to improve the MRRW bounds by analyzing this bound ?

\end{enumerate}

\section*{Acknowledgements}

We thank Fernando Oliveira and Frank Vallentin for helpful discussions,
and Andrei Raigorodskii for pointing out 
to us the reference \cite{AK}. We thank the reviewers for their careful reading and for  their remarks  that have helped to improve the presentation of this work.

\end{document}